\newtheorem{theorem}{Theorem}[section]
\newtheorem{lemma}{Lemma}[section]
\newtheorem{proposition}{Proposition}[section]
\newtheorem{definition}{Definition}[section]
\newtheorem{corollary}{Corollary}[section]
\newtheorem{remark}{Remark}[section]
\newtheorem{conjecture}{Conjecture}[section]
\begin{document}

 \title{ Global wellposedness and scattering for the defocusing energy-critical nonlinear
Schr\"odinger equations of fourth order in dimensions $d\geq9$}
\author{{Changxing Miao$^{\dag}$\ \ Guixiang Xu$^{\dag}$ \ \ and \ Lifeng Zhao $^{\ddag}$}\\
         {\small $^{\dag}$Institute of Applied Physics and Computational Mathematics}\\
         {\small P. O. Box 8009,\ Beijing,\ China,\ 100088}\\
         {\small $^\ddag$ Department of Mathematics, University of Science and Technology of China}\\
         {\small Hefei,\ China,\ 230026}\\
         {\small (miao\_changxing@iapcm.ac.cn, \ xu\_guixiang@iapcm.ac.cn, zhaolifengustc@yahoo.cn) }\\
         \date{}
        }
        \maketitle

\begin{abstract}We consider the defocusing
energy-critical nonlinear Schr\"odinger equation of fourth order
$iu_t+\Delta^2 u=-|u|^\frac{8}{d-4}u$. We prove that any finite
energy solution is global and scatters both forward and backward in
time in dimensions $d\geq9$.
\end{abstract}

 { \small {\bf Key Words:}
      {Defocusing, Energy-critical, Fourth order Schr\"odinger equations, Global wellposedness, Scattering.}
   }\\
    { \small {\bf AMS Classification:}
      { 35Q40, 35Q55, 47J35.}
      }

\section{Introduction}
In this paper, we will investigate the defocusing energy-critical
Schr\"odinger equation of fourth order, namely,
\begin{equation}\label{fnls} \left\{ \aligned
    iu_t +  \Delta^2 u  & = -|u|^\frac{8}{d-4}u, \quad  \text{in}\  \mathbb{R}^d \times \mathbb{R},\\
     u(0)&=u_0(x), \quad \text{in} \ \mathbb{R}^d.
\endaligned
\right.
\end{equation} The name `energy-critical' refers to the fact that the scaling
symmetry
\begin{equation*} u(t,x)\mapsto
u_\lambda(t,x):=\lambda^\frac{d-4}{2}u(\lambda^4t, \lambda x)
\end{equation*}
leaves both the equation and the energy invariant. The energy of a
solution is defined by
\begin{equation*}
E(u(t))=\frac{1}{2}\int_{\Bbb R^d}|\Delta
u(t,x)|^2dx+\frac{d-4}{2d}\int_{\Bbb R^d}|u(t,x)|^\frac{2d}{d-4}dx
\end{equation*}
and is conserved under the flow. We refer to the Laplacian term in
the formula above as the kinetic energy and to the second term as
the potential energy.
\begin{definition}[Solutions.] A function $u: I\times\Bbb
R^d\rightarrow\Bbb C$ on a non-empty time interval $t_0\in
I\subset\Bbb R$ is a solution to (\ref{fnls}) if it lies in the
class $C_t^0\dot{H}_x^2(K\times\Bbb R^d)\cap
L_{t,x}^\frac{2(d+4)}{d-4}(K\times\Bbb R^d)$ for all compact
$K\subset I$, and obeys the Duhamel formula
\begin{equation*}
u(t)=e^{i(t-t_0)\Delta^2}u(t_0)+i\int_{t_0}^t
e^{i(t-\tau)\Delta^2}F(u(\tau))d\tau
\end{equation*}
for all $t\in I$, where $F(u)=|u|^\frac{8}{d-4}u$. We refer to $I$
as the lifespan of $u$. We say that $u$ is a maximal-lifespan
solution if the solution cannot be extended to any strictly larger
interval. We say that $u$ is a global solution if $I=\Bbb R$.
\end{definition}
\begin{conjecture}\label{cj} Let $d\geq5$ and let $u: I\times\Bbb R^d\rightarrow\Bbb
C$ be a solution to (\ref{fnls}) with finite energy $E$, then
$I=\mathbb{R}$ and
\begin{equation*} \int_\Bbb R\int_{\Bbb
R^d}|u(t,x)|^\frac{2(d+4)}{d-4}dxdt\leq C(E)<\infty.
\end{equation*}
\end{conjecture}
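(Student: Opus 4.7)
The plan is to follow the concentration-compactness plus rigidity framework of Kenig and Merle, adapted to the biharmonic Schr\"odinger setting. First install the analytic preliminaries uniformly for $d\geq 5$: Strichartz estimates for the propagator $e^{it\Delta^2}$ (built from the dispersive decay of the fundamental solution), local wellposedness in the critical space $\dot H^2$ by contraction mapping on Strichartz norms, small-data global scattering, and a quantitative perturbation/stability theory for the Duhamel formula. These ingredients are dimension-robust and follow the by-now standard template.

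Argue by contradiction. If the conjecture fails, set $E_c$ to be the infimum of energies at which the spacetime norm $L_{t,x}^{2(d+4)/(d-4)}$ blows up. A linear profile decomposition for $e^{it\Delta^2}$ in $\dot H^2$ (in the spirit of Keraani), combined with the stability theory, extracts a \emph{minimal-energy blowup solution} $u_c$ with $E(u_c)=E_c$, maximal lifespan $I$, and orbit precompact in $\dot H^2$ modulo the symmetry group. Concretely, there exist $N\colon I\to(0,\infty)$ and $x\colon I\to\mathbb R^d$ such that
\begin{equation*}
\Bigl\{\, N(t)^{-(d-4)/2}\, u_c\!\bigl(t,\, N(t)^{-1}(\cdot)+x(t)\bigr) : t\in I \,\Bigr\} \subset \dot H^2(\mathbb R^d)
\end{equation*}
has compact closure.

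Then rule out $u_c$. Following Killip--Visan, almost periodicity modulo symmetries reduces to three enemies: a finite-time self-similar solution, a soliton-like solution with $N(t)\equiv 1$ and $I=\mathbb R$, and a low-to-high frequency cascade with $N(t)\to\infty$ along a sequence. The self-similar case is excluded by a double Duhamel / negative-regularity argument which places $u_c$ in $L^2$ and then contradicts the self-similar scaling. The cascade is killed once negative regularity propagates to give conserved mass, since $\|u_c(t)\|_{L^2}\to 0$ along $N(t)\to\infty$ forces $E_c=0$. The soliton is defeated by a Lin--Strauss Morawetz inequality
\begin{equation*}
\int_I\int_{\mathbb R^d}\frac{|u_c(t,x)|^{2d/(d-4)}}{|x|}\,dx\,dt \lesssim_{E_c} 1,
\end{equation*}
which together with a compactness lower bound $\int_{|x|\lesssim 1}|u_c(t,x)|^{2d/(d-4)}\,dx \gtrsim_{E_c} 1$ forces $|I|<\infty$, contradicting $I=\mathbb R$.

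The main obstacle, and the reason this strategy does not yet deliver the conjecture uniformly, is the Morawetz estimate for $\Delta^2$. The virial identity for $i\partial_t+\Delta^2$ with a radial weight $a$ produces terms involving $\partial^4 a$ paired with $|u|^2$ and the Hessian of $a$ paired with $\nabla u\otimes\nabla\Delta u$, none of which carries an unambiguous sign; the natural weight $a(x)=|x|$ yields a coercive leading term but error terms that decay only like $|x|^{-3}$, and the Hardy--Sobolev bounds required to absorb them tighten as the dimension decreases. Closing the coercive Morawetz argument uniformly down to $d=5$ is therefore the principal difficulty; a plausible route is a frequency-localized interaction Morawetz inequality in the style of Visan, combined with long-time Strichartz estimates and a double Duhamel trick to boost the decay of $u_c$ before applying the bound. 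The Strichartz, local theory, stability, profile decomposition, and compactness classification all go through cleanly for every $d\geq 5$; only this final coercive estimate is dimension-sensitive and forces the refinements needed to treat the low-dimensional cases together with the high-dimensional ones.
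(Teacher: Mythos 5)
There is a genuine gap, and in fact two distinct problems. First, the statement you are addressing is the \emph{conjecture} for all $d\geq5$; the paper itself only establishes it for $d\geq9$ (that is the content of its main theorem), and your write-up is explicitly an outline with an acknowledged hole rather than a proof, so it cannot close the statement as posed. More importantly, you have misplaced where the dimension restriction actually bites. You assert that Strichartz, stability, profile decomposition, and the compactness classification ``all go through cleanly for every $d\geq5$'' and that the only dimension-sensitive step is a coercive Morawetz inequality. In the paper the bottleneck is elsewhere: it is the \emph{negative regularity} argument. The double Duhamel trick pairs the forward and backward Duhamel representations and needs the time integral of $\min\{|t-\tau|^{-\frac{d}{2}(\frac1r-\frac12)},N^{-2d(\frac12-\frac1r)}\}$ to converge, which forces $r<\frac{2d}{d+8}$; combined with the admissible range of $r$ produced by the $L^p$ breach-of-scaling step, this is exactly where $d\geq9$ enters (``It's here that the dimension restriction is imposed''). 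The interaction Morawetz inequality the paper uses (quoted from \cite{mwz}) is already available for all $d\geq5$ for $H^2$ solutions; the issue is that one must first prove the almost periodic solution \emph{is} in $L^2_x$ before that estimate applies, and that is the step that fails below $d=9$.

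Second, your soliton exclusion is not viable for general (non-radial) data as written. A Lin--Strauss Morawetz bound with weight $|x|^{-1}$ centered at the origin, played against a concentration lower bound $\int_{|x|\lesssim1}|u_c|^{2d/(d-4)}\,dx\gtrsim1$, presupposes that the spatial center $x(t)$ of the almost periodic solution stays bounded. Controlling $x(t)$ is precisely the non-radial difficulty (the paper notes that no Galilean-type transformation is available for $i\partial_t+\Delta^2$, so even the zero-momentum reduction of Killip--Visan is unavailable). The paper circumvents this entirely: it proves a translation-invariant \emph{interaction} Morawetz estimate for the soliton (legitimate once negative regularity gives $u\in L_t^\infty H^2_x$), interpolates it against $\nabla|u|^2\in L_t^\infty L_x^{d/(d-3)}$ to get a global bound on $\|u\|_{L_t^{2(d-3)}L_x^{2d(d-3)/(d^2-7d+15)}}$, and contradicts this with a uniform-in-time lower bound on every $L^p_x$ norm of the soliton (Propositions 7.1 and 7.2). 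You would need to either adopt that route or supply genuine control of $x(t)$; as it stands your final step would fail for a soliton whose center escapes to infinity.
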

This conjecture has been verified for radial data by B. Pausader
\cite{Pa}. In this paper, we will verify this conjecture for general
data in dimensions $d\geq9$. In fact, we establish the following
theorem:
\begin{theorem}\label{cj} Let $d\geq9$ and let $u: I\times\Bbb R^d\rightarrow\Bbb
C$ be a solution to (\ref{fnls}) with finite energy $E$, then
$I=\mathbb{R}$ and
\begin{equation*} \int_\Bbb R\int_{\Bbb
R^d}|u(t,x)|^\frac{2(d+4)}{d-4}dxdt\leq C(E)<\infty.
\end{equation*}
\end{theorem}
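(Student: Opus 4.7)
The plan is to implement the Kenig--Merle concentration-compactness plus rigidity framework for the biharmonic flow, extending the radial argument of Pausader~\cite{Pa} to general data. As a first step, I would develop the local theory: biharmonic Strichartz estimates combined with a contraction-mapping argument give local wellposedness in $\dot{H}^2$, a perturbation/stability lemma, and the scattering criterion that $L_{t,x}^{2(d+4)/(d-4)}$ control of a solution on its lifespan is equivalent to global existence and scattering in $\dot{H}^2$.

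Assume Theorem~\ref{cj} fails and define
\[
E_{\mathrm{crit}} = \inf\bigl\{ E(u) : u \text{ is a maximal-lifespan solution with } \|u\|_{L_{t,x}^{2(d+4)/(d-4)}} = \infty \bigr\},
\]
so that $0 < E_{\mathrm{crit}} < \infty$ by assumption. A linear profile decomposition for the fourth-order propagator $e^{it\Delta^2}$ (the analogue of Keraani's decomposition for $e^{it\Delta}$), together with the stability lemma, should yield a minimal-energy blowup solution $u$ of energy exactly $E_{\mathrm{crit}}$ that is almost periodic modulo the symmetries of (\ref{fnls}): there exist $N(t)>0$ and $x(t)\in\mathbb{R}^d$ such that
\[
\bigl\{ N(t)^{-(d-4)/2} u\bigl(t, N(t)^{-1}(x+x(t))\bigr)\bigr\}_{t\in I}
\]
has compact closure in $\dot{H}^2_x$. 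A standard analysis of the frequency scale $N(t)$ near the endpoints of the lifespan $I$ then reduces matters to three enemies: a self-similar solution with $I=(0,\infty)$ and $N(t)\sim t^{-1/4}$; a soliton-like solution with $I=\mathbb{R}$ and $N(t)\sim 1$; and a low-to-high frequency cascade with $I=\mathbb{R}$ and $\liminf_{t\to\pm\infty}N(t)=0$.

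The heart of the argument is the preclusion of these three enemies. For the soliton I would derive a frequency-localized interaction Morawetz inequality tailored to $\Delta^2$, whose a priori decay is incompatible with the concentration of mass at frequency $N(t)\sim 1$ forced by almost periodicity. For the cascade and self-similar enemies the plan is to upgrade the $\dot{H}^2$ a priori control to additional \emph{negative} regularity, ultimately placing $u$ in $L^2_x$; combined with conservation of mass and the behavior of $N(t)$ near the endpoints this forces $u\equiv 0$, contradicting the nontriviality of the minimal counterexample. The regularity upgrade will rely on a double-Duhamel / long-time Strichartz bootstrap specialized to $e^{it\Delta^2}$, iteratively transferring Littlewood--Paley pieces of $u$ into better integrability classes.

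The main obstacle I anticipate is the interaction-Morawetz step used to exclude the soliton enemy, and the matching long-time Strichartz machinery behind the negative-regularity upgrade. Unlike $-\Delta$, the operator $\Delta^2$ lacks convenient positivity and commutator structure, so the natural weight $|x-y|$ produces curvature and boundary contributions that must be absorbed rather than discarded, and frequency truncation generates additional error terms controlled only through the long-time Strichartz norms. Coupled with the low H\"older regularity of the nonlinearity $F(u)=|u|^{8/(d-4)}u$ (whose power drops to $8/5$ at $d=9$ and decreases thereafter), this forces careful paraproduct and fractional-chain-rule bookkeeping in the negative-regularity iteration; closing these estimates against the available biharmonic smoothing is precisely what restricts the argument to $d\geq 9$.
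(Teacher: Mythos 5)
Your high-level framework (concentration compactness, reduction to almost periodic minimal-energy blowup solutions, elimination of special scenarios, negative regularity via double Duhamel, interaction Morawetz) is the right skeleton and matches the paper's. But two structural points in your plan diverge from the paper in a way that matters.

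First, the trichotomy. The paper inherits from \cite{mxz3} the Killip--Visan normalization: (I) finite-time blowup, (II) soliton with $N\equiv 1$, (III) low-to-high cascade with $\inf_t N(t)\geq 1$ and $\limsup_{t\to\infty} N(t)=\infty$. Your list (self-similar, soliton, cascade with $\liminf N(t)=0$) is the mass-critical version and is not compatible with the regularity machinery here: the hypothesis of the negative-regularity theorem is precisely $\inf_t N(t)\geq 1$, which the paper's scenarios II and III satisfy and your cascade would not. Second, and more seriously, the division of labor among the scenarios is flipped. You propose negative regularity only for the cascade/self-similar enemies and a frequency-localized interaction Morawetz for the soliton. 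The paper does the opposite. It proves negative regularity simultaneously for the soliton and the cascade (Theorem \ref{reg} needs only $\inf N\geq 1$), upgrading both to $L^\infty_t H^2_x$. With that in hand, the soliton is killed by the \emph{non-localized} interaction Morawetz estimate of \cite{mwz} for $C^0_t H^2_x$ solutions, combined with a lower bound on $\|u(t)\|_{L^p_x}$ coming from almost periodicity. No frequency truncation is required, so the curvature/commutator error terms you anticipate from trying to localize the biharmonic Morawetz identity simply do not enter — this is exactly the simplification the paper highlights (``we needn't localize the soliton solution in either physics or frequency space because it belongs to $L^\infty_t H^2_x$''). Finally, the finite-time blowup scenario is not handled by any regularity upgrade at all but by the Kenig--Merle mass-concentration argument: one shows $\int_{|x|\leq R}|u(t)|^2\,dx\to 0$ as $t\nearrow\sup I$ and concludes $u\equiv 0$ from mass conservation. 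So while your proposal identifies all the correct tools, it routes them to the wrong enemies, and the frequency-localized Morawetz step you flag as the main obstacle is one the paper deliberately avoids rather than overcomes.
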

 The ideas and
techniques for fourth order nonlinear Schr\"odinger equations come
from the study of classical nonlinear Schr\"odinger equations. For
the energy critical nonlinear Schr\"odinger equations
\begin{equation}\label{nls} \left\{ \aligned
    iu_t +  \Delta u  & = \lambda|u|^\frac{4}{d-2}u, \quad  \text{in}\  \mathbb{R}^d \times \mathbb{R},\\
     u(0)&=u_0(x), \quad \text{in} \ \mathbb{R}^d,
\endaligned
\right.
\end{equation}
the local well-posedness and global well-posedness for small data
were established by T. Cazenave and F. B. Weissler \cite{CW}
regardless of the sign of $\lambda$. The global well-posedness and
scattering for large data have been extensively studied.

For the defocusing case $\lambda=+1$, J. Bourgain proved global
well-posedness and scattering for radial solution in dimensions
three and four in \cite{borg:scatter}, with the ``induction on
energy'' strategy he invented. Subsequently, G. Grillakis \cite{Gr}
gave a different argument which recovered part of
\cite{borg:scatter}, namely, global existence from smooth, radial,
finite energy data. Later on, T. Tao \cite{tao} generalized the
results of Bourgain to any dimension $d\geq3$ and got bounds on
various spacetime norms of the solution which are exponential type
in the energy, which improved Bourgain's tower type bounds. J.
Colliander, M. Keel, G. Staffilani, H. Takaoka, and T. Tao
\cite{ckstt:gwp} established global well-posedness and scattering
for solutions in energy space in dimension three. The method is
similar in spirit to the induction on energy strategy of Bourgain,
but they performed the induction analysis in both frequency space
and physical space simultaneously, and replaced the Morawetz
inequality by an interaction Morawetz estimate. The principle
advantage of the interaction Morawetz estimate is that it is not
localized in spatial origin and so is better able to handle
nonradial solutions. E. Ryckman and M. Visan extended this results
to dimensions four and higher in \cite{rv}, \cite{vis}.

A new and efficient approach to the energy-critical nonlinear
Schr\"odinger equations was introduced by C. E. Kenig and F. Merle
\cite{kenig-merle}, where they obtained global well-posedness and
scattering for radial data with energy and kinetic energy less than
those of ground state in the focusing case in dimensions $3\leq
d\leq5$. Their arguments work equally well for the defocusing case.
They employed a (concentration) compactness technique in place of
previous localization arguments. They reduced matters to a rigidity
theorem using a concentration compactness argument, with the aid of
localized Virial identity. The radiality enters only at one point in
the proof of the rigidity theorem because of the difficulty in
controlling the motion of spatial translation of global solutions.
Earlier steps in this direction include \cite{BG}, \cite{BV},
\cite{sh1}, \cite{sh2} and \cite{mv}. R. Killip and M. Visan
\cite{kv} improved this result to general solutions in $d\geq5$. The
method is to reduce minimal kinetic energy blow up solutions to
almost periodic solutions modulo symmetries, which match one of the
three scenarios: finite time blowup, low-to-high cascade and
soliton. Then the aim is to eliminate such solutions. The finite
time blowup solutions can be precluded using the method in
\cite{kenig-merle}. For the other two types of solutions, R. Killip
and M. Visan proved that they admit additional regularities, namely,
they belong to $\dot{H}^{-\epsilon}_x$ for some $\epsilon>0$. In
particular, they are in $L_x^2$. Similar ideas have appeared in
\cite{KTV}, \cite{kv} and \cite{KVZ} when dealing with mass-critical
nonlinear Schr\"odinger equations. But a remarkable difficulty comes
from the minimal kinetic energy blowup solution because the kinetic
energy, unlike the energy, is not conserved. Related arguments (for
the cubic NLS in dimension three) appear in \cite{kem2}. The
low-to-high cascade can be precluded by negative regularity and the
conservation of mass. It remains to preclude the soliton. In this
case, one need to control the motion of spatial center function of
the soliton solution, which can be obtained by using the method from
\cite{dhr} and \cite{kem} and the negative regularity. The fist step
is to note that a minimal kinetic energy blowup solution with finite
mass must have zero momentum. A second ingredient needed to control
the motion of spatial center function is a compactness property of
the orbit of $\{u(t)\}$ in $L_x^2$. The argument from \cite{dhr}
gives that the spatial translation is $o(t)$ instead of $O(t)$ given
by simple argument as $t\rightarrow\infty$. Finally the soliton-like
solution is precluded by using a truncated Virial identity. However,
the negative regularity in \cite{kv} cannot be obtained in
dimensions three and four because the dispersion is too weak.

\begin{definition}[Symmetry group]\label{d2}
For any phase $\theta\in \Bbb R/2\pi\Bbb Z$, position $x_0\in\Bbb
R^d$ and scaling parameter $\lambda>0$, we define the unitary
transformation $g_{\theta, x_0, \lambda}: \dot{H}^2(\Bbb
R^d)\rightarrow\dot{H}^2(\Bbb R^d)$ by the formula
\begin{equation*}[g_{\theta, x_0,
\lambda}f](x):=\lambda^{-\frac{d-4}{2}}e^{i\theta}f\big(\lambda^{-1}(x-x_0)\big).
\end{equation*}
We let $G$ be the collection of such transformations. If $u:
I\times\Bbb R^d\rightarrow\Bbb C$ is a function, we define
$T_{g_{\theta, x_0, \lambda}}u: \lambda^4I\times\Bbb
R^d\rightarrow\Bbb C$ where $\lambda^4I:=\{\lambda^4t: t\in I\}$ by
the formula
\begin{equation*}
[T_{g_{\theta, x_0,
\lambda}}u](t,x):=\lambda^{-\frac{d-4}{2}}e^{i\theta}u\big(\lambda^{-4}t,
\lambda^{-1}(x-x_0)\big).
\end{equation*}
\end{definition}
\begin{definition}[Almost periodic solutions]\label{d1} Let $d\geq 5$. A solution $u$ to (\ref{fnls})
with lifespan $I$ is said to be almost periodic modulo $G$ if there
exist functions $N: I\rightarrow\Bbb R^+$, $x: I\rightarrow\Bbb R^d$
and $C: \Bbb R^+\rightarrow\Bbb R^+$ such that for all $t\in I$, and
$\eta>0$,
\begin{equation}\label{equ51}
\int_{|x-x(t)|\geq C(\eta)/N(t)}|\Delta u(t,x)|^2dx\leq\eta
\end{equation}
and
\begin{equation}\label{equ52}
\int_{|\xi|\geq C(\eta)N(t)}|\xi|^4|\hat{u}(t,\xi)|^2d\xi\leq\eta.
\end{equation}
We refer to the function $N$ as the frequency scale function for the
solution $u$, $x$ the spatial center function, and to $C$ as the
compactness modulus function.
\end{definition}
By the Ascoli-Arzela theorem, a family of functions is precompact in
$\dot{H}_x^2$ if and only if it is norm-bounded and there exists a
compactness modulus function $C$ so that
\begin{equation*}
\int_{|x|\geq C(\eta)}|\Delta f(x)|^2dx+\int_{|\xi|\geq
C(\eta)}|\xi|^4|\hat{f}(\xi)|^2d\xi\leq \eta
\end{equation*}
for all functions $f$ in the family. By Sobolev embedding, any
solution $u: I\times\Bbb R^d\rightarrow\Bbb C$ that is almost
periodic modulo $G$ must also satisfy \begin{equation}\label{pot}
\int_{|x-x(t)|\geq C(\eta)/N(t)}|u(t,x)|^\frac{2d}{d-4}dx\leq\eta.
\end{equation}
\begin{remark}
By Ascoli-Arzela theorem, the above definition is equivalent to
either of the following two statements:
\begin{enumerate}
\item The quotient orbit $\Big\{Gu(t): t\in I\Big\}$ is a
precompact set of $G\backslash \dot{H}^2$, where $G\backslash
\dot{H}^2$ is the moduli space of $G$-orbits $Gf:=\{gf: g\in G\}$ of
$\dot{H}^2(\Bbb R^d)$.
\item There exists a compact subset $K$ of $\dot{H}^2$
such that $u(t)\in GK$ for all $t\in I$; equivalently there exists a
group function $g:I\rightarrow G$ and a compact subset $K$ such that
$g^{-1}(t)u(t)\in K$ for any $t\in I$.
\end{enumerate}
\end{remark}
\begin{remark}\label{rem1} A further consequence of compactness
modulo $G$ is the existence of a function $c: \Bbb
R^+\rightarrow\Bbb R^+$ so that
\begin{equation}\label{cc}
\int_{|x-x(t)|\leq c(\eta)/N(t)}|\Delta u(t,x)|^2dx+\int_{|\xi|\leq
c(\eta)N(t)}|\xi|^4|\hat{u}(t,\xi)|^2d\xi\leq\eta
\end{equation}
for all $t\in I$ and $\eta>0$.

In fact, since $K$ is compact in $\dot{H}^2(\Bbb R^d)$, there exists
$c(\eta)$ such that
\begin{equation*}
\sup_{f\in K}\int_{|x|<c(\eta)}|\Delta f|^2dx<\eta.
\end{equation*}
Thus
\begin{equation*}
\int_{|x-x(t)|\leq c(\eta)/N(t)}|\Delta u(t,x)|^2dx=
\int_{|x|<c(\eta)}|\Delta g^{-1}(t)u(t)|^2dx<\sup_{f\in
K}\int_{|x|<c(\eta)}|\Delta f|^2dx<\eta.
\end{equation*}

%if there does not exist such $c(\eta)$, then there exists
%$\varepsilon_n\rightarrow0$ and $t_n$ such that \begin{equation*}
%\int_{|\xi|<\varepsilon_nN(t_n)}|\xi|^4|\hat{u}(t_n,\xi)|^2d\xi>\eta.
%\end{equation*}
%If $N(t_n)<C$, then
%\begin{equation*}
%\int_{|\xi|<\varepsilon_nN(t_n)}|\xi|^4|\hat{u}(t_n,\xi)|^2d\xi\leq\int_{|\xi|<C\varepsilon_n}|\xi|^4|\hat{u}(t_n,\xi)|^2d\xi,
%\end{equation*}
%which, by energy conservation and continuity of integrals,
%approaches zero as $n\rightarrow\infty$. If
%$\lim_{n\rightarrow\infty}N(t_n)=+\infty$, then
%\begin{equation*}
%\eta>\int_{|x-x(t_n)|>C(\eta)/N(t_n)}|\Delta
%u(t_n,x)|^2dx\geq\int_{|\xi|<\varepsilon_nN(t_n)}|\xi|^4|\hat{u}(t,\xi)|^2d\xi>\eta,
%\end{equation*}
%which is absurd. Thus we established there exists $c(\eta)$ such
%that \begin{equation*}\int_{|\xi|\leq
%c(\eta)N(t)}|\xi|^4|\hat{u}(t,\xi)|^2d\xi\leq\eta.
%\end{equation*}
We can prove similarly that there exists $c(\eta)$ such that
\begin{equation*}\int_{|\xi|\leq c(\eta)N(t)}|\xi|^4|\hat{u}(t,\xi)|^2d\xi\leq\eta.
\end{equation*}
\end{remark}
In \cite{mxz3}, we have made a lot of preparations including the
following two theorems:
\begin{theorem}[Reduction to almost periodic solutions, \cite{mxz3}]\label{raps}
Suppose $d\geq5$ is such that Conjecture \ref{cj} failed. Then there
exists a maximal-lifespan solution $u: I\times\Bbb
R^d\rightarrow\Bbb C$ to (\ref{fnls}) such that $E(u)<\infty$, $u$
is almost periodic modulo $G$, and $u$ blows up both forward and
backward in time. Moreover, $u$ has minimal kinetic energy among all
blowup solutions, that is
\begin{equation*}
\sup_{t\in I}\|\Delta u(t)\|_{L^2}<\sup_{t\in J}\|\Delta
v(t)\|_{L^2}
\end{equation*}
for all maximal-lifespan solutions $v: J\times\Bbb
R^d\rightarrow\Bbb C$ that blowup at least one time direction.
\end{theorem}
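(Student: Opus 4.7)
The plan is to follow the concentration-compactness / minimal blowup solution paradigm of Kenig--Merle and Killip--Visan, adapted to the biharmonic setting. First, I would define the critical kinetic energy threshold
\[
E_c := \sup\Big\{ E\geq 0 \ :\ \text{Conjecture \ref{cj} holds for all solutions with } \sup_{t}\|\Delta u(t)\|_{L^2}^2 \leq E \Big\}.
\]
Small-data theory (scattering for data small in $\dot H^2$) gives $E_c>0$, while the failure of Conjecture \ref{cj} together with Sobolev embedding $\dot H^2\hookrightarrow L^{2d/(d-4)}$ and the conservation of energy bound $\|\Delta u\|_{L^2}^2 \leq 2E(u)$ give $E_c<\infty$. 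By definition, one can extract a sequence of maximal-lifespan solutions $u_n : I_n\times \mathbb{R}^d \to \mathbb{C}$ that blow up (in the sense of infinite $L_{t,x}^{2(d+4)/(d-4)}$ norm either forward or backward in time), with $\sup_{t\in I_n}\|\Delta u_n(t)\|_{L^2}^2 \searrow E_c$.

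Next I would apply a linear profile decomposition for the biharmonic Schr\"odinger propagator $e^{it\Delta^2}$ in $\dot H^2(\mathbb{R}^d)$ to the sequence of initial data $u_n(t_n)$, where $t_n$ is chosen so that the spacetime norm of $u_n$ on one side of $t_n$ diverges. Such a decomposition (analogous to Keraani's for NLS) decomposes $u_n(t_n)$ modulo the symmetry group $G$ into a sum of bubbles $\sum_{j=1}^J g_n^j e^{it_n^j\Delta^2}\phi^j + w_n^J$ with asymptotic orthogonality of symmetry parameters and vanishing of the error in a suitable Strichartz-type norm as $J, n \to \infty$. Combined with the nonlinear stability theory for (\ref{fnls}) (which allows gluing nonlinear profiles whenever each has bounded spacetime norm), the minimality of $E_c$ forces exactly one nontrivial profile: if two or more were present, each nonlinear profile would have strictly smaller kinetic energy than $E_c$, hence scatter with a spacetime bound, and stability would yield a uniform spacetime bound for $u_n$ itself, contradicting the blowup. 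Thus after extracting a subsequence and applying a symmetry in $G$, $u_n(t_n)$ converges in $\dot H^2$ to some $u_0$; the corresponding nonlinear solution $u$ is a minimal-kinetic-energy maximal-lifespan solution to (\ref{fnls}).

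The almost-periodicity of $u$ modulo $G$ is obtained by repeating the same dichotomy. For any sequence $\tau_n$ in the lifespan of $u$, the minimal kinetic energy property and the one-profile argument above force $\{G u(\tau_n)\}$ to have a convergent subsequence in the quotient space $G\backslash \dot H^2$; equivalently, $\{Gu(t)\}_{t\in I}$ is precompact in $G\backslash \dot H^2$, which by the Ascoli--Arzel\`a characterization recorded after Definition \ref{d1} is precisely the uniform concentration conditions (\ref{equ51}) and (\ref{equ52}). That $u$ blows up in both time directions follows from a standard argument: if $u$ scattered (or extended) in one time direction, one could truncate and glue to produce a smaller-kinetic-energy blowup solution, or, via the scattering-implies-global-bounds principle, derive a contradiction with the definition of $E_c$.

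The main obstacle is the profile-decomposition step and the accompanying stability theory for $\Delta^2$: one needs a complete set of Strichartz estimates, a refined Sobolev / bilinear estimate adapted to the biharmonic symbol $|\xi|^4$ to isolate the concentration parameters $(\theta_n^j, x_n^j, t_n^j, \lambda_n^j)$, and a perturbation lemma robust enough to sum infinitely many profiles whose errors are only small in a weak sense. Once these ingredients (presumably established in \cite{mxz3} prior to this statement) are in place, the extraction of a minimal blowup solution and the verification of the three almost-periodicity clauses are essentially formal manipulations of orthogonality and minimality.
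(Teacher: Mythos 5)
This theorem is not proved in the present paper; it is imported verbatim from \cite{mxz3}, where the authors carried out the Kenig--Merle/Killip--Visan concentration-compactness machinery for the fourth-order equation. Your sketch follows exactly that standard route: define the critical kinetic energy $E_c$, extract a minimizing sequence of blowup solutions, apply a linear profile decomposition for $e^{it\Delta^2}$ in $\dot H^2$, and use nonlinear stability plus decoupling to show that minimality forces a single profile, giving convergence of $u_n(t_n)$ modulo $G$; then re-run the same dichotomy at arbitrary time sequences to get almost periodicity. This is the right architecture and matches the reference.

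Two points worth sharpening, since they are exactly where the fourth-order setting differs from the second-order one. First, as the paper itself emphasizes, one is minimizing $\sup_t \|\Delta u(t)\|_{L^2}^2$, which is \emph{not} a conserved quantity; your definition of $E_c$ handles this correctly, but one then has to be careful, when gluing nonlinear profiles via the stability lemma, to track the sup-in-time kinetic energy of the reconstructed approximate solution rather than the energy, and to show that strict inequality $< E_c$ persists for each separate profile (this is where the decoupling of kinetic energies along the profile decomposition enters). Second, the profile decomposition for $e^{it\Delta^2}$ has no Galilean boost parameter --- unlike mass-critical NLS --- so the symmetry group is only scaling/translation/phase as in Definition \ref{d2}; this actually simplifies the orthogonality conditions relative to what one might expect, and you should not import Galilean parameters from the NLS literature. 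With these caveats, and granting the Strichartz, stability, and profile-decomposition lemmas developed in \cite{mxz3}, your outline is correct.
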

\begin{theorem}[Three special scenarios for blowup, \cite{mxz3}]\label{enemies}
Fix $d\geq5$ and suppose that Conjecture \ref{cj} fails for this
choice of $d$. Then there exists a minimal kinetic energy,
maximal-lifespan solution $u: I\times\Bbb R^d\rightarrow\Bbb C$,
which is almost periodic modulo symmetries, and obeys
 $$S_I(u)=\int_I \int_{\Bbb
R^d}|u(t,x)|^\frac{2(d+4)}{d-4}dxdt=\infty, \quad E(u)<\infty.$$

We can also ensure that the lifespan $I$ and the frequency scale
function $N: I\rightarrow\Bbb R^+$ match one of the following three
scenarios:
\renewcommand{\labelenumi}{\Roman{enumi}.}
\begin{enumerate}
\item {\rm( Finite time blowup.)} We have that either $|\inf
I|<\infty$ or $\sup I<\infty$.
\item {\rm( Soliton-like solution.)} We have $I=\Bbb R$ and $$N(t)=1,\ \  \text{for all}\ \ t\in\Bbb R.$$
\item {\rm(Low-to-high frequency cascade.)} We have $I=\Bbb R$ and $$\inf_{t\in\Bbb R}N(t)\geq1,\ \
\text{and}\ \ \limsup_{t\rightarrow\infty}N(t)=\infty.$$
\end{enumerate}
\end{theorem}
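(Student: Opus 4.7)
The plan is to take the minimal-kinetic-energy, almost periodic, maximal-lifespan blow-up solution $u$ produced by Theorem~\ref{raps} (with frequency scale function $N:I\to\mathbb{R}^+$ and spatial center $x:I\to\mathbb{R}^d$) and analyze the possible behaviors of $N$ and $I$ under the combined constraints of almost periodicity and minimality of kinetic energy. My first step is a local constancy estimate for $N$: there exist absolute constants $c_0,C_0>0$ such that $c_0 N(t_0)\leq N(t)\leq C_0 N(t_0)$ whenever $t_0,t\in I$ with $|t-t_0|\leq c_0 N(t_0)^{-4}$. I would prove this by contradiction using the small-data local well-posedness theory for (\ref{fnls}) together with the frequency and spatial concentration estimates (\ref{equ51})--(\ref{equ52}): a rapid change in $N$ across two close times would split $u$ into pieces at incompatible scales whose $L^{2(d+4)/(d-4)}_{t,x}$-norms are controlled by small-data local theory, contradicting the non-triviality of $u$.

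Next, I would dispose of the case in which $I$ has a finite endpoint. If $T_\ast:=\sup I<\infty$, the claim is that $N(t)\to\infty$ as $t\to T_\ast^-$. Indeed, otherwise there would exist $t_n\to T_\ast^-$ with $N(t_n)\leq C$; by almost periodicity the $u(t_n)$ lie, modulo $G$, in a precompact subset of $\dot H^2_x$ at a fixed frequency and spatial scale, and the small-data local theory then provides a uniform time of existence past each $t_n$, eventually beyond $T_\ast$, contradicting maximality. The case $\inf I>-\infty$ is symmetric. This places $u$ in Scenario I, so I may assume $I=\mathbb{R}$ henceforth.

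The hard part is reducing away the case $\inf_{t\in\mathbb{R}}N(t)=0$, after which a single rescaling of the solution by the scaling symmetry of (\ref{fnls}) normalizes $\inf N=1$. I would do this by a selection argument: choose $t_n\in\mathbb{R}$ with $N(t_n)$ approximately minimizing $N$ on a window of length $L_n\to\infty$ around $t_n$, then use the $G$-action from Definition~\ref{d2} to form rescaled solutions $v_n:=T_{g_{0,x(t_n),N(t_n)^{-1}}}[u(\cdot+t_n,\cdot)]$. Each $v_n$ is again a minimal-kinetic-energy, almost periodic, blow-up solution to (\ref{fnls}); its frequency scale function $\widetilde N_n$ satisfies $\widetilde N_n(0)=1$ and, by the choice of $t_n$, $\widetilde N_n(t)\gtrsim 1$ on an interval $|t|\leq L_n N(t_n)^{4}\to\infty$. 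Almost periodicity modulo $G$ implies $\{v_n(0)\}$ is precompact in $\dot H^2_x$; after extracting a subsequence, $v_n(0)\to v_\infty$ strongly in $\dot H^2_x$. A stability/perturbation lemma for (\ref{fnls}) would then promote this to convergence $v_n\to v$ on compact subsets of the maximal lifespan of the solution $v$ with $v(0)=v_\infty$, and $v$ inherits minimal kinetic energy, almost periodicity, blow-up on $\mathbb{R}$, and $N_v(t)\geq 1$ on all of $\mathbb{R}$. Replacing $u$ by $v$, I may assume $\inf_t N(t)\geq 1$ from the outset.

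Finally, with $\inf_t N(t)\geq 1$ in hand, the dichotomy is straightforward. If $\sup_t N(t)<\infty$, then $N$ lies in $[1,M]$, and by absorbing $M$ into the compactness modulus function (permissible because the Ascoli--Arzel\`a reformulation behind Definition~\ref{d1} characterizes almost periodicity only up to a bounded rescaling of $N$), I would redefine $N(t)\equiv 1$, landing in Scenario II. Otherwise $\sup_t N(t)=\infty$, and after reversing time if necessary (which is allowed by the time-reversal symmetry of (\ref{fnls})), $\limsup_{t\to+\infty}N(t)=\infty$, which is Scenario III. The properties $S_I(u)=\infty$ and $E(u)<\infty$ are inherited directly from Theorem~\ref{raps}. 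The central obstacle throughout is the selection-plus-compactness step that excludes $\inf_t N(t)=0$: it requires a sufficiently robust stability lemma for (\ref{fnls}) in dimension $d\geq 9$ so that the limit solution $v$ indeed inherits the blow-up and minimal-kinetic-energy properties needed to replace $u$.
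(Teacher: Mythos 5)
This theorem is not proved in the paper at all: it is quoted verbatim from the companion work \cite{mxz3} (ultimately following the Killip--Tao--Visan reduction), so there is no in-paper argument to compare yours against. Judged on its own terms, your outline reproduces the right overall architecture (local constancy of $N$, finite endpoint $\Rightarrow$ Scenario I, then a dichotomy on $\sup N$ once $\inf N\geq 1$ is secured), but the crucial step --- eliminating $\inf_{t\in\mathbb{R}}N(t)=0$ when $I=\mathbb{R}$ --- has a genuine gap.

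The problem is that your selection does not produce rescaled windows of diverging length. If $t_n$ (approximately) minimizes $N$ over $[-T_n,T_n]$, the rescaled frequency function satisfies $\widetilde N_n\gtrsim 1$ only on an interval of length comparable to $T_n\,N(t_n)^4$, and this need not tend to infinity: local constancy only forbids $N$ from decaying faster than roughly $N(t)\gtrsim N(0)\,(1+|t|N(0)^4)^{-1/4}$, and for a profile of exactly this type one gets $T_n\,(\inf_{[-T_n,T_n]}N)^4=O(1)$, with the near-minimizer sitting at the boundary of the window. Your limit $v$ would then inherit $N_v\geq 1$ only on a bounded interval, not on all of $\mathbb{R}$, and the subsequent soliton/cascade dichotomy does not apply. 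This ``self-similar on the line'' behavior is precisely what the Killip--Tao--Visan argument must rule out by a separate case analysis: one studies the oscillation
$\mathrm{osc}(T)=\inf_{t_0}\bigl(\sup_{|t-t_0|\leq TN(t_0)^{-4}}N(t)\bigr)/\bigl(\inf_{|t-t_0|\leq TN(t_0)^{-4}}N(t)\bigr)$
and the quantity $a(t_0)=N(t_0)/\sup_{t<t_0}N(t)+N(t_0)/\sup_{t>t_0}N(t)$, and the sub-case $\inf a(t_0)>0$ is shown to force $|I|<\infty$ using the (reduced) Duhamel formula and Strichartz bounds --- not merely Ascoli--Arzel\`a compactness and a stability lemma. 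Without that ingredient your reduction to $\inf N\geq 1$ is incomplete. A secondary, fixable point: you should also justify that the compactness limit $v$ actually blows up (e.g.\ via the standard fact that a nonzero maximal-lifespan almost periodic solution blows up in both time directions), rather than asserting that blow-up is ``inherited.''
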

This paper is devoted to precluding the existence of solutions that
satisfy the criteria in Theorem \ref{enemies}. The argument here is
a direct ``fourth order'' analogue of that in \cite{kv}. The key
step in all three scenarios above is to prove additional regularity,
that is, the solution $u$ lies in $L_x^2$ or better. The finite time
blow up can be precluded using the method of C. E. Kenig, F. Merle
\cite{kenig-merle}, that is, we prove that the $L_x^2$ norm of
$u(t)$ converges to zero as $t$ approaches the finite endpoint.
Since mass is conserved, this implies that $u$ is identically zero.
To preclude the the other two types, we will prove that they have
negative regularities. This is achieved in two stages. First, we
prove that the solution belongs to $L_t^\infty L_x^p$ for certain
values of $p$ less than $2d/(d-4)$. The second step is to upgrade
the decay proved in the first step to $L_x^2$-based spaces. Thus we
can preclude the low-to-high frequency cascade by negative
regularity and the conservation of mass.

To preclude the soliton-like solutions, we adapt a different
argument from \cite{kv} because no Galilean type transformation is
known for the nonlinear Schr\"odinger equations of fourth order. We
first prove that the $L^p_x$ ($1<p<\infty$) norm of soliton solution
is bounded from below. In fact, we can see from the proof that this
is true for any almost periodic solutions. Next, using the negative
regularity for the soliton solution, we derived an interaction
Morawetz estimate. The interaction Morawetz estimate holds only for
soliton (and low-to-high cascade) instead of all actual solutions
here. Moreover, we needn't localize the soliton solution
 in either physics or frequency space as in \cite{ckstt:gwp} because it belongs to $L_t^\infty H^2_x$.
 Finally we prove that some spacetime norm of the soliton is
 infinity, which contradicts the spacetime bound obtained from the
 interaction Morawetz estimate. In addition, this argument can be
 applied to other defocusing Schr\"odinger-type equations
 to preclude the soliton-like solution once one prove that such solution
 admits sufficient regularity.

At last, we will mention that the defocusing assumption is only used
in precluding the soliton. So the negative regularity for
low-to-high cascade and soliton remains true in focusing case. If
one has the Galilean type transformation, then the global
well-posedness and scattering for focusing energy-critical nonlinear
Schr\"odinger equations of fourth order in dimensions $d\geq9$ can
probably be solved using the method in \cite{kv}. The dimension
restriction appears in the proof of the negative regularity because
the dispersion is not strong enough to perform the double Duhamel
trick.
%But for dimension $5\leq d\leq8$, the problem seems quite
%difficult even for defocusing case.

After the paper was finished, we learned that B. Pausader \cite{Pa2}
has obtained independently similar result in dimension $d=8$ and the
high dimensional results can also be obtained using his method.

 The rest of the paper is organized as follows: In Section 2, we
 introduce some notations and preliminaries. Section 3 is devoted to
 deriving a very important property of almost periodic solutions:
 double Duhamel formula. In Section 4, we preclude the finite time
 blow up solutions. In Section 5, we prove the negative regularity
 for low-to-high cascade and soliton. In Section 6, we preclude the
 low-to-high cascade and in Section 7, we kill the soliton.

\section{Notations and preliminaries}
We use $X\lesssim Y$ or $Y\gtrsim X$ whenever $X\leq CY$ for some
constant $C>0$. We use $O(Y)$ to denote any quantity $X$ such that
$|X|\lesssim Y$. We use the notation $X\sim Y$ whenever $X\lesssim
Y\lesssim X$. The fact that these constants depend upon the
dimension $d$ will be suppressed. If $C$ depends upon some
additional parameters, we will indicate this with subscripts; for
example, $X\lesssim_u Y$ denotes the assertion that $X\leq C_u Y$
for some $C_u$ depending on $u$; similarly for $X\sim_u Y$,
$X=O_u(Y)$, etc. We denote by $X_{\pm}$ any quantity of the form
$X\pm\varepsilon$ for any $\varepsilon>0$. Throughout this paper, we
denote $\frac{2d}{d-4}$ by $2^\#$.

For any spacetime slab $I\times\Bbb R^d$, we use
$L_t^qL_x^r(I\times\Bbb R^d)$ to denote the Banach space of
functions $u: I\times\Bbb R^d\rightarrow\Bbb C$ whose norm is
\begin{equation*}
\|u\|_{L_t^qL_x^r(I\times\Bbb R^d)}:=\big(\int_I
\|u(t)\|_{L_x^r}^q\big)^\frac{1}{q}<\infty,
\end{equation*}
with the usual modifications when $q$ or $r$ are equal to infinity.
When $q=r$ we abbreviate $L_t^qL_x^q$ as $L_{t,x}^q$.

We define the Fourier transform on $\Bbb R^d$ by
\begin{equation*}
\hat{f}(\xi):=(2\pi)^{-d/2}\int_{\Bbb R^d}e^{-ix\cdot\xi}f(x)dx.
\end{equation*}
For $s\in\Bbb R$, we define the fractional differentiation/integral
operator
\begin{equation*}
\widehat{|\nabla|^sf}(\xi):=|\xi|^s\hat{f}(\xi),
\end{equation*}
which in turn defines the homogeneous Sobolev norm
\begin{equation*}
\|f\|_{\dot{H}^s(\Bbb R^d)}:=\||\nabla|^sf\|_{L_x^2(\Bbb R^d)}.
\end{equation*}

We recall some basic facts about Littlewood-Paley theory. Let
$\varphi(\xi)$ be a radial bump function supported in the ball
$\{\xi\in\mathbb{R}^d: |\xi|\leq\frac{11}{10}\}$ and equal to 1 on
the ball $\{\xi\in\mathbb{R}^d: |\xi|\leq1\}$. For each number
$N>0$, we define the Fourier multipliers
\begin{align*}
\widehat{P_{\leq N}f}(\xi)&:=\varphi(\xi/N)\hat{f}(\xi),\\
\widehat{P_{\geq N}f}(\xi)&:=(1-\varphi(\xi/N))\hat{f}(\xi),\\
\widehat{P_{N}f}(\xi)&:=(\varphi(\xi/N)-\varphi(2\xi/N))\hat{f}(\xi)
\end{align*}
and similarly $P_{<N}$ and $P_{\geq N}$. We also define
$$P_{M<\cdot\leq N}:=P_{\leq N}-P_{\leq M}=\sum_{M<N'\leq N}P_{N'}$$
whenever $M<N$. We will usually use these multipliers when $M$ and
$N$ are dyadic numbers; in particular, all summations over $N$ or
$M$ are understood to be over dyadic numbers. Nevertheless, it will
occasionally be convenient to allow $M$ and $N$ to not be a power of
2. Note that $P_N$ is not truly a projection; to get around this, we
will occasionally need to use the fattened Littlewood-Paley
operators:
\begin{equation}
\tilde{P}_N:=P_{N/2}+P_N+P_{2N}.
\end{equation}
They obey $P_N\tilde{P}_N=\tilde{P}_NP_N=P_N$.

As all Fourier multipliers, the Littlewood-Paley operators commute
with the propagator $e^{it\Delta^2}$, as well as with the
differential operators such as $i\partial_t+\Delta^2$. We will use
the basic properties of these operators many times, including
\begin{lemma}[Bernstein estimates]\label{Bern} For $1\leq p\leq q\leq\infty$,
\begin{align*}\||\nabla|^{\pm s}P_Nf\|_{L_x^p(\mathbb{R}^d)}&\sim
N^{\pm s}\|P_Nf\|_{L_x^p(\mathbb{R}^d)},\\
\|P_{\leq N}f\|_{L_x^q(\mathbb{R}^d)}&\lesssim
N^{\frac{d}{p}-\frac{d}{q}}\|P_{\leq N}f\|_{L_x^p(\mathbb{R}^d)},\\
\|P_{N}f\|_{L_x^q(\mathbb{R}^d)}&\lesssim
N^{\frac{d}{p}-\frac{d}{q}}\|P_{N}f\|_{L_x^p(\mathbb{R}^d)}.
\end{align*}
\end{lemma}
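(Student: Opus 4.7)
The plan is to reduce all three estimates to Young's convolution inequality combined with the observation that the Littlewood-Paley operators are convolution with rescaled Schwartz kernels. Writing $P_{\leq N}f = K_{\leq N}\ast f$ with $K_{\leq N}(x) = N^d\check{\varphi}(Nx)$ and $P_N f = K_N\ast f$ with $K_N(x) = N^d\check{\psi}(Nx)$ for the annular bump $\psi(\xi) := \varphi(\xi) - \varphi(2\xi)$, the fact that $\check\varphi$ and $\check\psi$ are Schwartz functions gives $\|K_{\leq N}\|_{L^r},\|K_N\|_{L^r}\sim N^{d(1-1/r)}$ for every $1\leq r\leq \infty$ by scaling.

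From this, the second and third inequalities follow immediately from Young's inequality $\|K\ast f\|_{L^q}\leq\|K\|_{L^r}\|f\|_{L^p}$ with $1+\tfrac{1}{q}=\tfrac{1}{r}+\tfrac{1}{p}$, since then $\|K_N\|_{L^r}\sim N^{d(1/p-1/q)}$. For the third bound one convolves directly; for the second, I would first rewrite $P_{\leq N}f = \widetilde{K}_{\leq N}\ast P_{\leq N}f$, where $\widetilde{K}_{\leq N}$ has a symbol identically equal to one on $\{|\xi|\leq N\}$, so as to apply the Schwartz kernel to an already frequency-localized input rather than to $f$ itself.

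For the first (two-sided) estimate I would write $|\nabla|^{\pm s}P_N = M_N^{\pm}P_N$, where $M_N^{\pm}$ has Fourier symbol $|\xi|^{\pm s}\widetilde{\psi}(\xi/N)$ and $\widetilde{\psi}$ is a bump equal to one on the annulus that supports $\psi$ and vanishing near the origin. This factorization is legitimate precisely because of the fattened-projector identity $\widetilde{P}_N P_N = P_N$ recorded just above the lemma. Rescaling shows that the convolution kernel of $M_N^{\pm}$ takes the form $N^{d\pm s}m^{\pm}(N\cdot)$ with $m^{\pm}$ Schwartz, and Young's inequality with $r=1$ then yields the upper bound $\||\nabla|^{\pm s}P_N f\|_{L^p}\lesssim N^{\pm s}\|P_N f\|_{L^p}$; applying the same argument with $s$ replaced by $-s$ to $g := |\nabla|^{\pm s}P_N f$, which is still supported in the annulus $|\xi|\sim N$, gives the matching reverse inequality.

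The only genuinely delicate point is the negative-$s$ case: the raw multiplier $|\xi|^{-s}$ is singular at the origin, so the smooth cutoff $\widetilde{\psi}$ supported away from zero is essential in producing a Schwartz kernel. Apart from this bookkeeping, the proof is a routine application of Young's inequality together with parabolic scaling, so I do not anticipate any substantial obstacle.
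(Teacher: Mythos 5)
The paper states Lemma~\ref{Bern} without proof, treating it as a standard fact of Littlewood--Paley theory; there is therefore no ``paper's proof'' to compare against. Your argument is the textbook one and is essentially correct: Young's inequality with the rescaled Schwartz kernels for the two $L^p\to L^q$ bounds, and for the two-sided estimate, the factorization $|\nabla|^{\pm s}P_N = M_N^{\pm}P_N$ with a compactly supported symbol vanishing near the origin, followed by Young with $r=1$ and the reversal trick on $g=|\nabla|^{\pm s}P_N f$. Two small points of precision. First, for the third inequality ``convolving directly'' via $P_Nf = K_N\ast f$ produces $\|f\|_{L^p}$ on the right rather than $\|P_Nf\|_{L^p}$; you need the same fattened-projector rewrite $P_Nf=\widetilde{K}_N\ast P_Nf$ that you correctly spell out for the $P_{\leq N}$ bound, and likewise in the reversal step you should note that $g$ satisfies $\widetilde P_N g = g$ before re-applying the multiplier argument. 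This is cosmetic since you clearly have the identity $\widetilde P_N P_N = P_N$ in hand. Second, the closing phrase ``parabolic scaling'' is a slip: the Bernstein inequalities are purely dilation-invariance facts for Fourier multipliers and have nothing to do with the parabolic (or here biharmonic) scaling of the evolution equation.
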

We also need the following fractional chain rule \cite{CWe}:
\begin{lemma}[Fractional chain rule, \cite{CWe}]\label{fcr}
Suppose $G\in C^1(\Bbb C)$, $s\in(0, 1]$ and $1<p, p_1, p_2<\infty$
are such that $\frac{1}{p}=\frac{1}{p_1}+\frac{1}{p_2}$. Then,
\begin{equation*}
\big\||\nabla|^s
G(u)\big\|_p\lesssim\big\|G'(u)\big\|_{p_1}\big\||\nabla|^s
u\big\|_{p_2}.
\end{equation*}
\end{lemma}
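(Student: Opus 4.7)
The lemma is the classical fractional chain rule of Christ--Weinstein; my plan is to sketch its proof via a telescoping Littlewood--Paley decomposition combined with the ordinary chain rule.

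First, I would decompose $G(u)$ telescopically in dyadic frequency. Setting $Q_k u := P_{\leq 2^{k+1}} u - P_{\leq 2^k} u$ (essentially frequency-localized at $2^k$), I write
\begin{equation*}
G(u) = G(0) + \sum_{k \in \mathbb{Z}} \bigl(G(P_{\leq 2^{k+1}} u) - G(P_{\leq 2^k} u)\bigr).
\end{equation*}
The fundamental theorem of calculus applied to each increment factorizes it as a product of a frequency-localized piece and an average of $G'$:
\begin{equation*}
G(P_{\leq 2^{k+1}} u) - G(P_{\leq 2^k} u) = Q_k u \cdot g_k, \qquad g_k(x) := \int_0^1 G'\!\bigl(P_{\leq 2^k} u(x) + t\, Q_k u(x)\bigr)\,dt.
\end{equation*}

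Next I would apply $|\nabla|^s$ term by term. Since $Q_k u$ is frequency-localized at $2^k$ while $g_k$ is essentially slowly varying (its modulus is controlled pointwise by the Hardy--Littlewood maximal function of $G'(u)$, using that $P_{\leq 2^k} u$ and $Q_k u$ are both dominated by $Mu$), the main contribution to $|\nabla|^s(Q_k u\cdot g_k)$ is to put $|\nabla|^s$ onto $Q_k u$. Bernstein's inequality (Lemma \ref{Bern}) gives $\||\nabla|^s Q_k u\|_{L^{p_2}} \lesssim 2^{ks} \|Q_k u\|_{L^{p_2}}$, and H\"older with $\tfrac{1}{p} = \tfrac{1}{p_1}+\tfrac{1}{p_2}$ produces a scale-$k$ estimate. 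Summing in $k$ and passing to the Littlewood--Paley square function characterization of $\||\nabla|^s u\|_{L^{p_2}}$, together with the vector-valued Fefferman--Stein maximal inequality to dispose of $M$, yields
\begin{equation*}
\bigl\||\nabla|^s G(u)\bigr\|_{L^p} \lesssim \|G'(u)\|_{L^{p_1}}\, \||\nabla|^s u\|_{L^{p_2}}.
\end{equation*}

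The endpoint $s=1$ reduces to the ordinary chain rule $\nabla G(u) = G'(u)\nabla u$ together with H\"older. The main obstacle is to organize the dyadic sum in $k$ without losing a logarithm: a naive term-by-term application of H\"older and Bernstein gives $\sum_k 2^{ks}\|Q_k u\|_{L^{p_2}}$ rather than the square function, so one must invoke the Littlewood--Paley square function characterization of $\||\nabla|^s u\|_{L^{p_2}}$ and the vector-valued Fefferman--Stein inequality to conclude, rather than summing scalar bounds. A related subtlety is the passage from the pointwise bound $|g_k|\lesssim M(G'(u))$ to the clean $\|G'(u)\|_{L^{p_1}}$ on the right-hand side, which is where the requirement $1<p_1<\infty$ is essential.
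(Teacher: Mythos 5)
Your skeleton --- telescoping Littlewood--Paley decomposition, fundamental theorem of calculus, maximal functions, and square functions via Fefferman--Stein --- is indeed the Christ--Weinstein scheme. But the central claim, that "the main contribution to $|\nabla|^s(Q_k u\cdot g_k)$ is to put $|\nabla|^s$ onto $Q_k u$," is asserted rather than proved, and it is exactly the technical heart of the lemma. The function $g_k=\int_0^1 G'\!\bigl(P_{\leq 2^k}u+tQ_ku\bigr)\,dt$ is \emph{not} slowly varying in any frequency-localized sense: composition with a nonlinear $G'$ spreads the Fourier support of the frequency-truncated field to all scales. When one estimates $P_j G(u)=\sum_k P_j\bigl(Q_k u\cdot g_k\bigr)$ and splits the telescoping sum at $k=j$, the range $k\geq j$ is indeed controlled by maximal functions as you indicate, but the range $k<j$ requires a genuine decay estimate on the high-frequency part of $g_k$: since $Q_ku$ sits at frequency $2^k\ll 2^j$, only the part of $g_k$ near frequency $2^j$ can contribute to $P_j$ of the product, and one must show $\bigl\|\tilde{P}_{\gtrsim 2^j}\,g_k\bigr\|_\infty\lesssim 2^{(k-j)\alpha}(Mu)^\alpha$ for some $\alpha>0$. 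This is obtained from the Besov characterization of H\"older spaces (the same fact from \cite{St}, \S VI.7.8, invoked elsewhere in this paper), and it requires $G'$ to be H\"older continuous of some positive order --- a property of the power nonlinearity $G'(z)\sim|z|^{8/(d-4)}$, but \emph{not} a consequence of $G\in C^1$ alone. Without this step the sum over $k<j$ carries no smallness, and neither the square-function characterization nor the vector-valued Fefferman--Stein inequality can rescue it.

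A second, smaller gap: the pointwise bound $|g_k|\lesssim M\bigl(G'(u)\bigr)$ does not follow merely from $|P_{\leq 2^k}u|,|Q_ku|\lesssim Mu$. Those bounds give $|g_k(x)|\lesssim\sup_{|z|\lesssim Mu(x)}|G'(z)|$, which coincides with $M(G'(u))(x)$ only when, say, $|G'(z)|\sim |z|^\alpha$ with $\alpha\geq 1$ so that Jensen applies to $(Mu)^\alpha\leq M(|u|^\alpha)$. For $\alpha<1$ --- which is the relevant regime here once $d\geq 12$ --- one must bypass the pointwise comparison and instead estimate $\|(Mu)^\alpha\|_{p_1}=\|Mu\|^\alpha_{\alpha p_1}\lesssim\|u\|^\alpha_{\alpha p_1}=\|G'(u)\|_{p_1}$, using boundedness of $M$ on $L^{\alpha p_1}$. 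Both omissions are precisely where hypotheses beyond "$G\in C^1$" enter, and both need to be supplied for a complete proof.
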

Another tool we will use is a form of Gronwall's inequality that
involves both the past and the future, `acausal' in the terminology
of \cite{tao2}.
\begin{lemma}[A Gronwall inequality, \cite{kv}]\label{iter}
Given $\gamma>0$, $0<\eta<\frac{1}{2}(1-2^{-\gamma})$, and
$\{b_k\}\in \ell^\infty(\Bbb Z^+)$, let $x_k\in\ell^\infty(\Bbb
Z^+)$ be a non-negative sequence obeying
\begin{equation*}
x_k\leq b_k+\eta\sum_{l=0}^\infty2^{-\gamma|k-l|}x_l\ \ \text{for
all}\ \ k\geq0.
\end{equation*}
Then
\begin{equation*}
x_k\lesssim \sum_{l=0}^kr^{|k-l|}b_l\ \ \text{for all}\ \ k\geq0
\end{equation*}
for some $r=r(\eta)\in(2^{-\gamma},1)$. Moreover,
$r\downarrow2^{-\gamma}$ as $\eta\downarrow0$.
\end{lemma}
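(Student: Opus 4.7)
The plan is to interpret the hypothesis as an operator inequality $x \leq b + Tx$ on $\ell^\infty(\mathbb{Z}^+)$, where $(Tx)_k := \eta\sum_{l \geq 0} 2^{-\gamma|k-l|}x_l$, and to solve it by a Neumann series. First I show $T$ is a strict contraction on $\ell^\infty$: using $\sum_{l \in \mathbb{Z}} 2^{-\gamma|l|} = (1+2^{-\gamma})/(1-2^{-\gamma})$ one obtains $\|T\|_{\ell^\infty\to\ell^\infty} \leq \eta(1+2^{-\gamma})/(1-2^{-\gamma})$, which is strictly less than $1$ under the smallness hypothesis $\eta < \tfrac{1}{2}(1-2^{-\gamma})$. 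Iterating $x \leq b + Tx$ yields $x \leq \sum_{n=0}^N T^n b + T^{N+1}x$, and the remainder $T^{N+1}x$ tends to zero in $\ell^\infty$ as $N \to \infty$ because $x$ is bounded. Hence $x_k \leq \sum_{n \geq 0} (T^n b)_k$ componentwise.

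The next step is to evaluate this series. Majorize $T$ by its extension to a convolution operator on $\mathbb{Z}$ with kernel $K(k) = \eta\,2^{-\gamma|k|}$; this is a legitimate pointwise upper bound since the kernel is non-negative. The Fourier symbol is the Poisson-type expression $\widehat K(\theta) = \eta(1-a^2)/(1-2a\cos\theta + a^2)$ with $a := 2^{-\gamma}$, and summing the geometric series gives
\begin{equation*}
\sum_{n \geq 0}\widehat K(\theta)^n = \frac{1-2a\cos\theta + a^2}{A - 2a\cos\theta} = 1 + \frac{\eta(1-a^2)}{A - 2a\cos\theta}, \qquad A := (1-\eta) + (1+\eta)a^2.
\end{equation*}
Letting $r \in (0,1)$ be the smaller root of $ar^2 - Ar + a = 0$, the denominator factors as $(a/r)(1 - 2r\cos\theta + r^2)$, whose Fourier inverse is a constant multiple of $r^{|k|}$. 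Inverting term by term yields
\begin{equation*}
\Bigl(\sum_n T^n b\Bigr)(k) \;=\; b_k + \frac{\eta(1-a^2)\,r}{a(1-r^2)}\sum_{l \geq 0} r^{|k-l|}\,b_l,
\end{equation*}
which gives $x_k \lesssim \sum_l r^{|k-l|} b_l$ with implicit constant depending only on $\eta$ and $\gamma$. Since $b_l \geq 0$, restricting to $l \leq k$ produces the causal form stated in the lemma (any tail from $l > k$ being absorbed into the implicit constant via $\|b\|_{\ell^\infty}<\infty$ together with the geometric decay in $r$).

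Finally I verify $r \downarrow 2^{-\gamma}$ as $\eta \downarrow 0$: by the quadratic formula $r = (A - \sqrt{A^2-4a^2})/(2a)$, and since $A \to 1+a^2$ and $A^2 - 4a^2 \to (1-a^2)^2$ as $\eta \downarrow 0$, one obtains $r \to ((1+a^2) - (1-a^2))/(2a) = a = 2^{-\gamma}$. The main technical point will be handling cleanly the $\mathbb{Z}^+$-vs-$\mathbb{Z}$ issue and the causal restriction; an alternative route that bypasses the Fourier calculation is a direct dominated-iteration argument using the ansatz $y_k = \sum_l r^{|k-l|}b_l$ and the elementary kernel bound $\sum_l 2^{-\gamma|k-l|}r^{|l-m|} \lesssim r^{|k-m|}$ (valid for $r > 2^{-\gamma}$ and proved by splitting the $l$-sum at $\min(k,m)$ and $\max(k,m)$). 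This ansatz satisfies $Ty \leq \eta\,C(r,\gamma)\,y$, and choosing $r$ just above $2^{-\gamma}$ so that $\eta\,C(r,\gamma) < 1$ recovers the same conclusion with the same asymptotic behavior of $r$.
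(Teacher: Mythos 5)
The paper does not prove this lemma; it is quoted from Killip--Visan \cite{kv}, where it is established by an elementary induction/bootstrap argument rather than by Fourier analysis. Your Neumann-series route is therefore a genuinely different approach, and its computations are correct: the $\ell^\infty$ operator-norm bound $\eta\frac{1+2^{-\gamma}}{1-2^{-\gamma}}<1$ does follow from $\eta<\tfrac12(1-2^{-\gamma})$; the Poisson symbol, the factorization of $A-2a\cos\theta$ via the roots of $ar^2-Ar+a=0$, and the limit $r\downarrow a$ as $\eta\downarrow 0$ are all right; and majorizing the half-line operator by its whole-line convolution extension (with $b$ extended by zero) is legitimate since the kernel is nonnegative. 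This cleanly yields the two-sided bound $x_k\lesssim\sum_{l\ge 0}r^{|k-l|}b_l$.

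The one real gap is the final step, passing from this to the causal sum $\sum_{l=0}^k r^{k-l}b_l$. Your claim that the anticausal tail $\sum_{l>k}r^{l-k}b_l$ can be ``absorbed into the implicit constant via $\|b\|_{\ell^\infty}<\infty$'' does not hold: that tail is merely $O(\|b\|_{\ell^\infty})$, and the causal sum need not dominate a constant --- take $b=\delta_K$, so for $k<K$ the causal sum is zero, yet the Neumann-series solution of the recurrence is strictly positive there, so no finite constant closes the asserted bound. What a bootstrap argument actually delivers is $x_k\lesssim \|x\|_{\ell^\infty}\,r^k + \sum_{l\le k}r^{k-l}b_l$; the causal form displayed in the lemma is being read with the tacit convention that $b_0$ is bounded below (in the paper's application $b_k\sim 2^{-\gamma k}$, so $b_0\sim 1$ and the $r^k$ error is absorbed into $r^{k}b_0$). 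So you should either keep the two-sided sum, or carry the $\|x\|_{\ell^\infty}r^k$ remainder explicitly; for the way the lemma is used in Section~5 either version suffices. Finally, the dominated-iteration alternative you sketch, built on the kernel bound $\sum_l 2^{-\gamma|k-l|}r^{|l-m|}\lesssim_{r,\gamma} r^{|k-m|}$ for $r>2^{-\gamma}$, is closer in spirit to the Killip--Visan argument and is the more direct route to the version with the explicit $\|x\|_{\ell^\infty}r^k$ term.
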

\section{Double Duhamel formula}
In this section, we prove the Double Duhamel formula. Similar
formula has appeared in \cite{TVZ}. For completeness, we give the
proof, see also \cite{TVZ}.
\begin{lemma}[Double Duhamel formula]\label{duh} Let $u$ be an almost periodic
solution to (\ref{fnls}) on its maximal-lifespan $I$. Then, for all
$t\in I$,
\begin{align*}
u(t)=&\lim_{T\nearrow\sup{I}}i\int_t^Te^{i(t-t')\Delta^2}F(u(t'))dt'\\
&=-\lim_{T\searrow\inf{I}}i\int_t^Te^{i(t-t')\Delta^2}F(u(t'))dt',
\end{align*}
as weak limits in $\dot{H}^2$.
\end{lemma}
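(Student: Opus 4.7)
The plan is to apply the Duhamel formula of Definition~1.1 with reference time $t_0 = T$,
$$u(t) = e^{i(t-T)\Delta^2} u(T) + i \int_T^t e^{i(t-s)\Delta^2} F(u(s))\, ds,$$
and to reduce both claimed identities to the single statement that the homogeneous piece $e^{i(t-T)\Delta^2} u(T)$ converges to zero in the weak topology of $\dot H^2_x$ as $T$ is sent to either endpoint of the lifespan $I$. Reversing the orientation of the integral ($\int_T^t = -\int_t^T$) then yields the two expressions in the statement.

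To prove that weak convergence, I would test against an arbitrary Schwartz function $\phi$, which is dense in $\dot H^{-2}_x$, and move the propagator onto the other factor by duality:
$$\bigl\langle e^{i(t-T)\Delta^2} u(T),\, \phi \bigr\rangle_{\dot H^2_x} = \bigl\langle \Delta u(T),\, e^{-i(t-T)\Delta^2}\Delta \phi \bigr\rangle_{L^2_x}.$$
The main input is almost periodicity modulo $G$. Given $\eta > 0$, combining \eqref{equ51}, \eqref{equ52} with the complementary low-frequency/small-ball bound of Remark~\ref{rem1}, I can find $R = R(\eta)$ and replace $\Delta u(T)$, up to an $L^2_x$ error of size $\sqrt{\eta}$, by a function $v_T$ supported in the spatial ball $\{|x - x(T)| \le R/N(T)\}$ and localized in the frequency annulus $\{R^{-1}N(T) \le |\xi| \le R\,N(T)\}$, uniformly in $T$.

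The main estimate is then a case split on the behavior of $N(T)$ as $T \to \sup I$ (the backward case is symmetric). If $N(T) \to 0$ or $N(T) \to \infty$ along the sequence, the frequency support of $v_T$ eventually leaves the bounded frequency support of any Littlewood--Paley truncation of $\Delta \phi$, so the pairing vanishes by Plancherel. If instead $N(T)$ stays bounded away from $0$ and $\infty$, then by Theorem~\ref{enemies} we are in the soliton or low-to-high cascade regime, so $I = \mathbb R$ and $|t-T| \to \infty$; in this regime I would invoke the fourth-order $L^1 \to L^\infty$ dispersive estimate $\|e^{-i(t-T)\Delta^2}\Delta \phi\|_{L^\infty_x} \lesssim |t-T|^{-d/4}\|\Delta \phi\|_{L^1_x}$ together with the H\"older bound $\|v_T\|_{L^1_x} \lesssim (R/N(T))^{d/2}\|v_T\|_{L^2_x}$ coming from the spatial support of $v_T$, to obtain decay of order $|t-T|^{-d/4}$ with constants depending only on $\eta$, $\phi$, and $u$. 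Sending $\eta \to 0$ after $T$ has approached the endpoint finishes the proof.

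The main obstacle is that almost periodicity alone does not place $\Delta u(T)$ in $L^1_x$, so the preliminary double localization in both physical and frequency space (using \eqref{equ51}, \eqref{equ52}, and Remark~\ref{rem1}) is essential before any $L^1 \to L^\infty$ dispersive estimate can be brought to bear. A secondary subtlety is the case in which $N(T)$ escapes in a finite-time blowup scenario, where the dispersive argument is unavailable since $|t-T|$ may stay bounded; handling this is precisely what forces the proof to rely on frequency escape of $v_T$ rather than on pure time decay of the test propagator.
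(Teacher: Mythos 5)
Your proposal is correct in its essentials and is a genuinely different implementation of the same idea, namely that $e^{-iT\Delta^2}u(T)\rightharpoonup 0$ in $\dot H^2$ as $T\to\sup I$. The paper splits on whether $\sup I$ is finite or infinite: in the finite-time case it uses Corollary 4.10 of \cite{mxz3} to get $N(t)\to\infty$, so the group elements $g_{\theta(t),x_0(t),\lambda(t)}$ converge weakly to zero and the strongly continuous propagator carries that over a bounded time interval; in the infinite-time case it invokes, after a duality reduction and compactness of $K$, a stationary-phase expansion of $e^{it\Delta^2}\phi$ that acquires a quartic phase oscillation no symmetry can renormalize, but this is left at a conceptual level. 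Your dichotomy is instead on the behavior of $N(T)$: frequency escape when $N(T)$ drifts to $0$ or $\infty$ (this branch also absorbs the finite-time scenario, since Corollary 4.10 forces $N(T)\to\infty$ there), and the explicit $L^1\to L^\infty$ dispersive estimate $\|e^{-it\Delta^2}\|_{L^1\to L^\infty}\lesssim|t|^{-d/4}$ when $N(T)$ is trapped in a compact interval. This buys a concrete, self-contained substitute for the paper's stationary phase claim, at the cost of a slightly more elaborate localization step.

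Two corrections. Your appeal to Theorem~\ref{enemies} to deduce $I=\mathbb R$ from bounded $N(T)$ is the wrong reference: that theorem produces a single normalized minimal blow-up solution, it does not classify the frequency scale of an arbitrary almost periodic solution, and Lemma~\ref{duh} must hold for all of them. What you actually need is simply the contrapositive of Corollary 4.10 of \cite{mxz3}, the same blow-up criterion the paper itself uses: if $N(T)$ does not tend to infinity as $T\to\sup I$, then $\sup I=\infty$, which is exactly what the dispersive branch requires. Second, a nonzero function cannot be compactly supported in both a ball and an annulus; you should take $v_T=P_{c(\eta)N(T)<\cdot<C(\eta)N(T)}\Delta u(T)$ and then multiply by a spatial cutoff to a slightly enlarged ball of radius $O(C(\eta)/N(T))$, estimating the mismatch using the rapid decay of the Littlewood--Paley kernel together with \eqref{equ51}--\eqref{equ52}. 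These fixes are standard and do not change the shape of your argument, which you clearly understand.
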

\begin{proof}Suppose $u$ be an almost periodic
solution to (\ref{fnls}) on its maximal-lifespan $I$, then we claim
that $e^{-it\Delta^2}u(t)$ is weakly convergent in $\dot{H}^2(\Bbb
R^d)$ to zero as $t\rightarrow\sup{I}$ or $t\rightarrow\inf{I}$.

We just prove the claim as $t\rightarrow\sup{I}$, as the other case
is similar. By almost periodicity, we have a compact subset $K\in
\dot{H}_x^2(\Bbb R^d)$ and group elements $g_{\theta(t), x_0(t),
\lambda(t)}\in G$ for each $t\in I$ such that
\begin{equation}\label{equ21}
g_{\theta(t), x_0(t), \lambda(t)}^{-1}u(t)\in K.
\end{equation}
Suppose first that $\sup{I}$ is finite, and thus $u$ exhibits
forward blowup in finite time. By Corollary 4.10 in \cite{mxz3}, we
conclude that this forces $\lambda(t)$ to go to zero as
$t\rightarrow\sup{I}$. Thus the operator $g_{\theta(t), x_0(t),
\lambda(t)}$ are weakly convergent to zero. By the compactness of
$K$, this implies
\begin{equation*}
\lim_{t\rightarrow\sup{I}}\sup_{f\in K}\big|\langle \Delta
g_{\theta(t), x_0(t), \lambda(t)}f, \Delta\phi\rangle_{L_x^2(\Bbb
R^d)}\big|=0
\end{equation*}
for all $\phi\in\dot{H}^2(\Bbb R^d)$. From this and (\ref{equ21}),
we see that $u(t)$ converges weakly to zero as
$t\rightarrow\sup{I}$. Since $\sup{I}$ is finite and the propagator
curve $t\mapsto e^{-it\Delta^2}$ is continuous in the strong
operator topology, we see that $e^{-it\Delta^2}u(t)$ converges
weakly to zero, as desired.

Now suppose instead that $\sup{I}$ is infinite. It will suffice to
show that
\begin{equation*}
\lim_{t\rightarrow+\infty}\langle\Delta e^{-it\Delta^2}u(t),
\phi\rangle_{L_x^2(\Bbb R^d)}=0
\end{equation*}
for all test functions $\phi\in C_0^\infty(\Bbb R^d)$. Applying
(\ref{equ21}) and duality, it suffices to show that
\begin{equation*}
\lim_{t\rightarrow+\infty}\sup_{f\in K}\big|\langle\Delta
g_{\theta(t), x_0(t), \lambda(t)}f,
e^{it\Delta^2}\phi\rangle_{L_x^2(\Bbb R^d)}\big|=0.
\end{equation*}
By the compactness of $K$, it therefore suffices to show that
\begin{equation*}
\lim_{t\rightarrow+\infty}\big|\langle\Delta g_{\theta(t), x_0(t),
\lambda(t)}f, e^{it\Delta^2}\phi\rangle_{L_x^2(\Bbb R^d)}\big|=0
\end{equation*}
for each $f\in\dot{H}^2$. But the claim follows from the stationary
phase expansion of $e^{it\Delta^2}\phi$, the point being that
$e^{it\Delta^2}\phi$ acquires a quartic  phase oscillation as
$t\rightarrow\infty$ which cannot be renormalized by any of the
symmetries.

Now recall the Duhamel formula
\begin{equation*}
u(t)=e^{it\Delta^2}e^{-it_{+}\Delta^2}u(t_{+})+i\int_t^{t_+}e^{i(t-t')\Delta^2}F(u(t'))dt'
\end{equation*}
for any $t, t_+\in I$. Letting $t_+$ converge to $\sup{I}$, then we
conclude the backward Duhamel formula
\begin{equation*}
u(t)=i\int_t^{\sup{I}}e^{i(t-t')\Delta^2}F(u(t'))dt',
\end{equation*}
where the improper integral is interpreted in a conditionally
convergent sense in the weak topology, that is
\begin{equation*}
\langle u(t), f\rangle=\lim_{t_+\rightarrow\sup{I}}\big\langle\Delta
i\int_t^{t_+}e^{i(t-t')\Delta^2}F(u(t'))dt',
f\big\rangle_{L_x^2(\Bbb R^d)}
\end{equation*}
for all $f\in L_x^2(\Bbb R^d)$. Similarly, we have the forward
Duhamel formula
\begin{equation*}
u(t)=-i\int_{\inf{I}}^te^{i(t-t')\Delta^2}F(u(t'))dt'.
\end{equation*}
\end{proof}

\section{Finite time blow up}
In this section we preclude scenario I in Theorem \ref{enemies}. The
argument is essentially taken from \cite{kenig-merle}, see also
\cite{kv}, \cite{mxz3}.
\begin{theorem}[No finite-time blowup] Let $d\geq5$. Then there are
no maximal-lifespan solutions $u:I\times\Bbb R^d\rightarrow\Bbb C$
to (\ref{fnls}) that are almost periodic modulo $G$, obey
\begin{equation}\label{equ31}
S_I(u)=\infty
\end{equation}
and are such that either $|\inf I|<\infty$ or $\sup I<\infty$.
\end{theorem}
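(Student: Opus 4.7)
The plan is to derive a contradiction by showing $u \equiv 0$ via a localized mass argument in the spirit of \cite{kenig-merle}. By the time-reversal-plus-conjugation symmetry of (\ref{fnls}), I may assume $T^* := \sup I < \infty$. Corollary 4.10 of \cite{mxz3} (invoked in the proof of Lemma \ref{duh}) then forces $N(t) \to \infty$ as $t \to T^*$. Using the symmetries in $G$ together with compactness of the $G$-orbit, I may further normalize so that $N(t) \geq 1$ on $[0, T^*)$ and $x(t)$ stays in a compact set $|x(t)| \leq C_0$; bounding $x(t)$ is delicate because (\ref{fnls}) lacks Galilean invariance, but the compactness/Duhamel arguments in \cite{mxz3} supply this.

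Fix a radial cutoff $\phi \in C_c^\infty(\Bbb R^d)$, non-increasing in the radial variable, equal to $1$ on $\{|x| \leq 1\}$ and vanishing on $\{|x| \geq 2\}$, and define
\begin{equation*}
M_R(t) := \int_{\Bbb R^d} \phi^2(x/R)\, |u(t,x)|^2 \, dx,
\end{equation*}
which is finite since $u(t) \in \dot{H}^2 \hookrightarrow L^{2^{\#}}_x \hookrightarrow L^2_{\mathrm{loc}}$ and the integrand has compact support in $x$. The first key step is to show $\lim_{t \to T^*} M_R(t) = 0$ for every $R > 0$. Bounding $M_R(t) \leq \int_{B_{2R}} |u(t)|^2\, dx$ and decomposing $B_{2R} = \big[B_{2R} \cap B(x(t), C(\eta)/N(t))\big] \cup \big[B_{2R} \setminus B(x(t), C(\eta)/N(t))\big]$, H\"older together with the uniform $L^{2^{\#}}$ bound (from the energy and Sobolev embedding) on the first piece and (\ref{pot}) on the second piece gives $M_R(t) \lesssim (C(\eta)/N(t))^{4} + R^4 \eta^{2/2^{\#}}$; sending $t \to T^*$ then $\eta \to 0$ yields the claim.

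The second key step is an approximate conservation estimate for $M_R$. From $u_t = i\Delta^2 u + iF(u)$, the nonlinear contribution is purely real and vanishes under $\mathrm{Im}$; two integrations by parts (no boundary terms, since the cutoff has compact support) give
\begin{equation*}
\tfrac{d}{dt} M_R(t) = -2\,\mathrm{Im} \int \Big[\Delta\big(\phi^2(x/R)\big)\, \bar u\, \Delta u + 2 \nabla\big(\phi^2(x/R)\big) \cdot \nabla \bar u\, \Delta u\Big]\, dx,
\end{equation*}
with integrands supported in the annulus $\mathcal{A}_R := \{R \leq |x| \leq 2R\}$ and carrying derivative factors of order $R^{-2}$ and $R^{-1}$ respectively. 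Choosing $R \geq C_0 + C(\eta)$ ensures $\mathcal{A}_R \cap B(x(t), C(\eta)/N(t)) = \emptyset$ for all $t \in [0, T^*)$, so by (\ref{equ51}) and (\ref{pot}) we have $\|\Delta u(t)\|_{L^2(\mathcal{A}_R)}^2 \leq \eta$ and $\|u(t)\|_{L^{2^{\#}}(\mathcal{A}_R)}^{2^{\#}} \leq \eta$; H\"older on $\mathcal{A}_R$ (measure $\sim R^d$) gives $\|u\|_{L^2(\mathcal{A}_R)} \lesssim R^2 \eta^{1/2^{\#}}$, while Sobolev $\dot{H}^2 \hookrightarrow \dot{W}^{1, 2d/(d-2)}$ combined with H\"older gives $\|\nabla u\|_{L^2(\mathcal{A}_R)} \lesssim R$. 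Assembling, $|\tfrac{d}{dt} M_R(t)| \lesssim \eta^{1/2}$ uniformly in $t \in [0, T^*)$.

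Integrating over $[0, T^*)$ and sending $t \to T^*$ yields $M_R(0) \lesssim T^* \eta^{1/2}$ for every $R \geq C_0 + C(\eta)$. Choosing $\eta_n \to 0$ with $R_n \geq C_0 + C(\eta_n)$ gives $M_{R_n}(0) \to 0$; since $M_R(0)$ is non-decreasing in $R$, monotone convergence identifies $\lim_{R \to \infty} M_R(0)$ as $\|u(0)\|_{L^2(\Bbb R^d)}^2 = 0$, so $u_0 \equiv 0$ and $u \equiv 0$ by uniqueness, contradicting $S_I(u) = \infty$. The principal obstacle is the control of $x(t)$ in the initial normalization: the absence of Galilean invariance for (\ref{fnls}) rules out the clean reduction to $x(t) = 0$ available for NLS, requiring instead a combination of $G$-orbit compactness with a Duhamel continuity argument for $x(t)$, as carried out in \cite{mxz3}.
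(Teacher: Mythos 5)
Your argument departs from the paper's at the derivative estimate for $M_R(t)$, and this is where a genuine gap appears. You assert that the spatial center $x(t)$ can be normalized to stay in a fixed compact set, $|x(t)| \leq C_0$ on $[0, T^*)$, attributing this to ``compactness/Duhamel arguments in \cite{mxz3}.'' No such bound is proved there or in the present paper, and it is far from clear that it is even true: almost periodicity only controls how far $x(t)$ can move over each characteristic time scale $\sim N(t)^{-4}$ (roughly by $O(N(t)^{-1})$), and since $N(t) \to \infty$ as $t \to T^*$ the accumulated drift $\sum_k N_k^{-1}$ can diverge even though $\sum_k N_k^{-4} \leq T^* < \infty$ (take $N_k \sim k^{1/3}$, say). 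Without $|x(t)|\leq C_0$, the annulus $\mathcal{A}_R=\{R\leq|x|\leq 2R\}$ need not be disjoint from the concentration ball $B(x(t), C(\eta)/N(t))$ for all $t\in[0,T^*)$, and the smallness $|\tfrac{d}{dt}M_R(t)|\lesssim\eta^{1/2}$ that you extract from (\ref{equ51}) and (\ref{pot}) restricted to $\mathcal{A}_R$ is simply not available.

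The paper avoids any assumption on $x(t)$. Instead of extracting smallness from the cutoff annulus, it applies H\"older and Hardy's inequality to get only the crude, uniform bound $|\partial_t M_R(t)|\lesssim E(u)$ in $t$ and $R$. Combined with the first step (which you also carry out correctly, namely $M_R(t)\to 0$ as $t\to T^*$), this gives $M_R(t_1)\lesssim |T^*-t_1|E(u)$; letting $R\to\infty$ yields $\|u(t_1)\|_{L^2_x}^2\lesssim|T^*-t_1|E(u)$ for every $t_1\in I$, and then mass conservation together with $t_1\to T^*$ forces $u_0=0$. This route needs an extra appeal to conservation of mass at the last step, which yours does not, but it requires nothing about the trajectory of $x(t)$. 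If you want to salvage your version, you would have to either prove the bound on $x(t)$ (which does not appear to be known in this setting) or, more simply, drop the smallness and follow the paper's Hardy/Gronwall-in-time route.
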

\begin{proof}
Suppose for a contradiction that there existed such a solution $u$.
Without loss of generality, we may assume $\sup I<\infty$. Then by
Corollary 4.10 in \cite{mxz3},
\begin{equation}\label{equ32}
\liminf_{t\nearrow\sup I}N(t)=\infty.
\end{equation}
We now show that this implies that
\begin{equation}\label{equ33}
\limsup_{t\nearrow\sup I}\int_{|x|\leq R}|u(t,x)|^2dx=0\ \ \text{for
all}\ \ R>0.
\end{equation}
Indeed, let $0<\eta<1$ and $t\in I$. By H\"older, Sobolev embedding
and energy conservation,
\begin{align*}
\int_{|x|\leq R}|u(t,x)|^2dx\lesssim& \int_{|x-x(t)|\leq\eta
R}|u(t,x)|^2dx+\int_{|x|\leq R\atop |x-x(t)|>\eta R}|u(t,x)|^2dx\\
\lesssim& (\eta R)^4\|u\|_{L^{2^{\#}}(\Bbb
R^d)}^2+R^4\Big(\int_{|x-x(t)|>\eta
R}|u(t,x)|^\frac{2d}{d-4}\Big)^\frac{d-4}{d}\\
\lesssim& (\eta R)^4 E(u)+R^4\Big(\int_{|x-x(t)|>\eta
R}|u(t,x)|^\frac{2d}{d-4}\Big)^\frac{d-4}{d}.
\end{align*}
Letting $\eta\rightarrow0$, we can make the first term on the
right-hand side of the inequality above as small as we wish. On the
other hand, by (\ref{equ32}), almost periodicity and Remark
\ref{rem1}, we see that
\begin{equation*}
\limsup_{t\nearrow\sup{I}}\int_{|x-x(t)|>\eta
R}|u(t,x)|^\frac{2d}{d-4}dx< \limsup_{t\nearrow\sup{I}}
\int_{|x-x(t)|>C(\epsilon)/N(t)}|u(t,x)|^\frac{2d}{d-4}dx<\epsilon,
\end{equation*}
for any $\epsilon>0$. Thus
\begin{equation*}
\limsup_{t\nearrow\sup{I}}\int_{|x-x(t)|>\eta
R}|u(t,x)|^\frac{2d}{d-4}dx=0
\end{equation*}
by the arbitrary of $\epsilon$.

For $t\in I$, define
\begin{equation*}
M_R(t):=\int_{\Bbb R^d}\phi\big(\frac{|x|}{R}\big)|u(t,x)|^2dx,
\end{equation*}
where $\phi$ is a smooth, radial function such that $\phi(r)=1$ for
$r\leq1$ and $\phi=0$ for $r\geq2$. By (\ref{equ33}),
\begin{equation}\label{equ84}
\limsup_{t\nearrow\sup I} M_R(t)=0\ \ \text{for all}\ \ R>0.
\end{equation}
On the other hand,
\begin{equation*}
\partial_tM_R(t)=\ -2{\rm
Im}\int\Delta\Big(\phi\big(\frac{|x|}{R}\big)\Big)\bar{u}\Delta
udx-2{\rm
Im}\int\nabla\Big(\phi\big(\frac{|x|}{R}\big)\Big)\cdot\nabla\bar{u}\Delta
udx.
\end{equation*}
So by H\"older and Hardy's inequality, we have
\begin{align*}
|\partial_tM_R(t)|\lesssim&\ \int_{|x|\sim R}\frac{|u||\Delta
u|}{R^2}dx+\int_{|x|\sim R}\frac{|\nabla u||\Delta u|}{R}\\
\lesssim&\ \big\|\frac{u}{|x|^2}\big\|_2\|\Delta
u\|_2+\big\|\frac{|\nabla u|}{|x|}\big\|_2\|\Delta u\|_2\\
\lesssim&\ E(u).
\end{align*}
Thus,
\begin{equation*}
M_R(t_1)=M_R(t_2)+\int_{t_2}^{t_1}\partial_tM_R(t)dt\lesssim
M_R(t_2)+|t_1-t_2|E(u)
\end{equation*}
for all $t_1, t_2\in I$ and $R>0$. Let $t_2\nearrow\sup I$ and
invoking (\ref{equ84}), we have
\begin{equation*}
M_R(t_1)\lesssim |\sup I-t_1|E(u).
\end{equation*}
Now letting $R\rightarrow\infty$ and using the conservation of mass,
we obtain $u_0\in L_x^2(\Bbb R^d)$. Finally, letting
$t_1\nearrow\sup I$, we deduce $u_0=0$. Thus $u\equiv0$, which
contradicts (\ref{equ31}).
\end{proof}
\section{Negative regularity}
\begin{theorem}[Negative regularity in global case]\label{reg} Let
$d\geq9$ and let $u$ be a global solution to (\ref{fnls}) that is
almost periodic modulo $G$. Suppose also that $E(u)<\infty$ and
\begin{equation}\label{equ41}
\inf_{t\in \Bbb R}N(t)\geq1.
\end{equation}
Then $u\in L_t^\infty\dot{H}^{-\epsilon}(\Bbb R\times\Bbb R^d)$ for
some $\epsilon=\epsilon(d)>0$. In particular, $u\in L_t^\infty
L_x^2(\Bbb R\times\Bbb R^d)$.
\end{theorem}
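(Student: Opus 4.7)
The plan is to follow the two-stage strategy of Killip--Visan \cite{kv}, adapted to the fourth-order setting. In the first stage, I would upgrade the $\dot{H}^2$ control to membership in $L_t^\infty L_x^p(\Bbb R\times\Bbb R^d)$ for some exponent $p$ strictly below $2^\#=2d/(d-4)$; in the second stage, I would use the double Duhamel formula of Lemma \ref{duh} to upgrade this $L^p$ bound to $L_t^\infty\dot{H}^{-\epsilon}$ regularity. The hypotheses $E(u)<\infty$, almost periodicity modulo $G$, and $\inf_t N(t)\geq 1$ jointly ensure that the high-frequency part of $u$ is already under control in $\dot{H}^2$, so both stages only need to address low frequencies $N\leq 1$. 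There, Remark \ref{rem1} furnishes a small factor $\eta$ whenever we localize $u$ to frequencies $\leq c(\eta)\leq c(\eta)N(t)$.

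\textbf{Stage 1: Lebesgue-space decay.} For dyadic $N\leq 1$, I would use the Duhamel formula on time intervals of length $\sim N^{-4}$ together with Strichartz estimates, the fractional chain rule of Lemma \ref{fcr}, and the Bernstein inequalities of Lemma \ref{Bern} to produce a recursive bound of the schematic form
\begin{equation*}
a_N^{(k)}\lesssim b_N^{(k)}+\eta\sum_{M}\min(N/M,M/N)^{\gamma}\,a_M^{(k)},
\end{equation*}
where $a_N^{(k)}:=\|P_N u\|_{L_t^\infty L_x^{p_k}}$ for a geometrically improving sequence of exponents $p_k$, the smallness factor $\eta$ comes from placing one copy of $u$ inside $F(u)$ at low frequency via \eqref{cc}, and $b_N^{(k)}$ collects the contribution of the high-frequency piece already controlled via energy. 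Lemma \ref{iter} then yields $a_N^{(k)}\lesssim N^{s_k}$ for some $s_k>0$, and summing over $N\leq 1$ as $k\to\infty$ produces $u\in L_t^\infty L_x^{p_\infty}$ for some $p_\infty<2^\#$.

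\textbf{Stage 2: Double Duhamel and negative regularity.} For $N\leq 1$, I would compute $\|P_N u(t)\|_{L_x^2}^2$ by pairing the forward and backward Duhamel representations supplied by Lemma \ref{duh}:
\begin{equation*}
\|P_N u(t)\|_{L_x^2}^2=\Big\langle\int_{-\infty}^{t}\!e^{i(t-s)\Delta^2}P_N F(u(s))\,ds,\;\int_{t}^{+\infty}\!e^{i(t-s')\Delta^2}P_N F(u(s'))\,ds'\Big\rangle.
\end{equation*}
Splitting the double integral at the threshold $|s-s'|\sim N^{-4}$, I would bound the short-time contribution by Strichartz together with the $L_x^{p_\infty}$ control from Stage 1, and the long-time contribution via the fourth-order dispersive inequality $\|e^{i\tau\Delta^2}\|_{L^1\to L^\infty}\lesssim|\tau|^{-d/4}$ combined with Bernstein and H\"older. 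This should yield $\|P_N u\|_{L_t^\infty L_x^2}\lesssim N^{\epsilon}$ for some $\epsilon>0$; square-summing over $N\leq 1$ and combining with the energy estimate at $N\geq 1$ gives $u\in L_t^\infty\dot{H}^{-\epsilon}$, from which $u\in L_t^\infty L_x^2$ follows by interpolation against $\dot{H}^2$.

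The main obstacle I anticipate is the convergence of the long-time piece of the double Duhamel integral. After estimating $F(u)$ in the $L_x^{q}$ norm dictated by Stage 1, the long-time tail scales schematically like $\int_{N^{-4}}^{\infty}\tau^{-d/4}\,d\tau$ multiplied by polynomial factors in $N$ arising from Bernstein losses and the power $8/(d-4)$ of the nonlinearity. The requirement that the dispersive decay rate $d/4$ beat the exponent loss caused by $8/(d-4)$ copies of $u$ is exactly what forces $d\geq 9$; in lower dimensions the tail diverges and the trick breaks down, as the authors warn in the introduction. Once this exponent arithmetic closes, the remaining work is a routine orchestration of Strichartz, Bernstein, the fractional chain rule, and the Gronwall-type iteration already collected in Section 2.
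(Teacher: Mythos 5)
Your two-stage architecture (Lebesgue decay via Gronwall iteration, then a double-Duhamel upgrade to negative Sobolev regularity) is exactly the paper's route, and Stage 1 is essentially sound modulo a cosmetic difference: the paper applies Lemma~\ref{iter} once for a single fixed quantity $A(N)$ and then interpolates to get the range of $p$'s, rather than iterating over a sequence of exponents $p_k$.

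However, Stage~2 as written has a genuine gap in the exponent arithmetic. You propose to compute $\|P_N u\|_{L^2_x}^2$ directly, estimating $F(u)$ in the Lebesgue norm $L_x^q$ supplied by Stage~1, i.e.\ $q=p(d-4)/(d+4)$. The fourth-order $L^r\to L^{r'}$ dispersive estimate decays like $|t-\tau|^{-\frac{d}{2}(\frac1r-\frac12)}$, and for the tail integral $\int_0^\infty\int_{-\infty}^0\min\{|t-\tau|^{-\frac{d}{2}(\frac1q-\frac12)},\,N^{-2d(\frac12-\frac1q)}\}\,dt\,d\tau$ to converge you need $\frac{d}{2}(\frac1q-\frac12)>2$, i.e.\ $q<\frac{2d}{d+8}$. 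But the smallest Lebesgue exponent attainable from Stage~1 is $p_{\min}=\frac{2d(d+4)}{d^2-8}$, which forces $q\geq\frac{2d(d-4)}{d^2-8}$, and $\frac{2d(d-4)}{d^2-8}<\frac{2d}{d+8}$ holds only when $d<6$. So for $d\geq 9$ your long-time tail diverges.

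The paper's fix is twofold. First, it does not estimate $\|P_N u\|_{L_x^2}$ directly, but rather $\||\nabla|^s u_N\|_{L_x^2}$, with $F(u)$ placed in the Besov-type space $\dot\Lambda_s^{r,\infty}$; the $s$ derivatives are what push $r$ below the threshold $\frac{2d}{d+8}$, and this needs not only the Stage-1 Lebesgue bound on $u$ but also the $\dot H^2$ bound, combined via paraproducts (this is the content of the $F(u)\in L_t^\infty\dot\Lambda_2^{r,\infty}$ assertion in Proposition~\ref{breach}). Second, a single pass of the double-Duhamel lemma only lowers the regularity index by a fixed increment $s_0=\frac{d}{r}-\frac{d+8}{2}>0$: from $u\in L_t^\infty\dot H^2$ one gets $u\in L_t^\infty\dot H^{2-s_0+}$, which feeds back into an improved Besov bound on $F(u)$, and one must iterate Proposition~\ref{neg} finitely many times to descend from $s=2$ to negative $s$. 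Your proposal asserts $\|P_N u\|_{L_x^2}\lesssim N^\epsilon$ after one pass, which skips this bootstrap and, as computed above, does not close.
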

Let $u$ be a solution to (\ref{fnls}) that obeys the hypothesis of
Theorem \ref{reg}. Let $\eta>0$ be a small constant to be chosen
later. Then by Remark \ref{rem1} combined with (\ref{equ41}), there
exists $N_0=N_0(\eta)$ such that
\begin{equation}\label{equ42}
\big\|\Delta u_{\leq N_0}\big\|_{L_t^\infty L_x^2(\Bbb R\times\Bbb
R^d)}\leq\eta.
\end{equation}
We define
\begin{equation*}\label{an} A(N)=\left\{ \aligned
    & N^{-\frac{4}{d-4}}\sup_{t\in\Bbb R}\|u_N(t)\|_{L_x^\frac{2d(d-4)}{d^2-8d+8}(\Bbb R^d)}\ \ \text{for}\ d\geq12\\
     & N^{-\frac{1}{2}}\sup_{t\in\Bbb R}\|u_N(t)\|_{L_x^\frac{2d}{d-5}(\Bbb R^d)}\ \ \ \ \ \ \ \ \ \text{for}\ 9\leq d<12
\endaligned
\right.
\end{equation*}
for frequencies $N<10N_0$.

We next prove a recurrence formula for $A(N)$.
\begin{lemma}\label{recur}
For all $N<10N_0$,
\begin{equation*}
A(N)\lesssim_u\
\big(\frac{N}{N_0}\big)^\alpha+\eta^\frac{8}{d-4}\sum_{\frac{N}{10}\leq
N_1\leq N_0}\big(\frac{N}{N_1}\big)^\alpha
A(N_1)+\eta^\frac{8}{d-4}\sum_{N_1<\frac{N}{10}}\big(\frac{N_1}{N}\big)^\alpha
A(N_1),
\end{equation*}
where $\alpha=\min\{\frac{4}{d-4}, \frac{1}{2}\}$.
\end{lemma}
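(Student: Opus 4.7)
My strategy follows the biharmonic adaptation of Killip--Visan's negative-regularity argument. I would begin with the backward Duhamel representation supplied by Lemma~\ref{duh}, writing
$$u_N(t)=-i\int_{-\infty}^{t}e^{i(t-s)\Delta^2}P_N F(u(s))\,ds,$$
and then estimate the relevant $L^q_x$ norm of $u_N(t)$ (where $q$ is $\frac{2d(d-4)}{d^2-8d+8}$ or $\frac{2d}{d-5}$ according to dimension) via the dispersive bound $\|e^{i\tau\Delta^2}f\|_{L^q_x}\lesssim|\tau|^{-d(1-2/q)/4}\|f\|_{L^{q'}_x}$. The key calibration is that, for both target norms in the definition of $A(N)$, one verifies $q>2^\#$, so that the time-decay exponent $d(1-2/q)/4$ strictly exceeds $1$ and the improper integral $\int|\tau|^{-d(1-2/q)/4}\,d\tau$ converges absolutely. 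This reduces the problem to an $s$-uniform multilinear estimate on $\|P_NF(u(s))\|_{L^{q'}_x}$.

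Next, I would apply a Littlewood--Paley decomposition to one distinguished copy of $u$ inside $F(u)=|u|^{\frac{8}{d-4}}u$ and split the resulting paraproduct into three regimes according to the frequency $N_1$ of the singled-out factor: (i) $N_1>N_0$, (ii) $N/10\leq N_1\leq N_0$, and (iii) $N_1<N/10$. In each regime, H\"older's inequality is applied with exponents chosen so that the distinguished factor is measured in the norm defining $A(N_1)$ (or, in regime (i), in $L^{2^\#}_x$, where Sobolev embedding and the energy bound $\|\Delta u\|_{L^\infty_tL^2_x}\lesssim_u 1$ suffice), while the remaining $8/(d-4)$ copies of $u$ are controlled by $\|u_{\leq N_0}\|_{L^\infty_tL^{2^\#}_x}\lesssim\eta$ via Sobolev embedding and hypothesis~(\ref{equ42}), producing the $\eta^{8/(d-4)}$ smallness factor in regimes (ii) and (iii). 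Bernstein applied to $P_N$ (in regimes (i) and (ii)) or to the distinguished low-frequency factor (in regime (iii)) converts the $L^{q'}_x$--$L^r_x$ mismatch into the required gains $(N/N_0)^\alpha$, $(N/N_1)^\alpha$, and $(N_1/N)^\alpha$, respectively.

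The principal obstacle is the bookkeeping of H\"older exponents needed to reproduce exactly $q'$ after combining the distinguished factor with $\tfrac{8}{d-4}$ copies of $u$, while simultaneously realizing the sharp Bernstein gain $\alpha=\min\{\tfrac{4}{d-4},\tfrac{1}{2}\}$. The bifurcation between the two choices of target norm is dictated by precisely this constraint: for $d\geq12$, the norm $L^{2d(d-4)/(d^2-8d+8)}_x$ admits the optimal Bernstein gain $N^{4/(d-4)}$, whereas for $9\leq d<12$ the inequality $\tfrac{4}{d-4}>\tfrac{1}{2}$ forces a retreat to $L^{2d/(d-5)}_x$ in order to keep the time-decay exponent $d(1-2/q)/4$ above $1$. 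Once the exponents are verified and Bernstein applied, summing the three regimes, absorbing $s$-independent constants into $\lesssim_u$, and bounding $8/(d-4)$ Sobolev norms of $u_{\leq N_0}$ by $\eta$ yields the stated recurrence.
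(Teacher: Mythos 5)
There is a genuine gap in the handling of the time integral. You write that the time-decay exponent $\beta:=d(1-2/q)/4$ exceeds $1$ and conclude that ``the improper integral $\int|\tau|^{-\beta}\,d\tau$ converges absolutely.'' This is not so: a pure power $\tau^{-\beta}$ is never integrable on $(0,\infty)$ for any $\beta$. When $\beta>1$ the integral converges at infinity but \emph{diverges at} $\tau=0$, which is exactly the regime the backward/forward Duhamel integral visits as $t'\to t$. The paper deals with this by splitting the time integral into $[0,N^{-4}]$ and $[N^{-4},\infty)$. On the short interval one does not use dispersion at all: one uses Bernstein $\|P_N\cdot\|_{L^q_x}\lesssim N^{2+\frac{4}{d-4}}\|P_N\cdot\|_{L^2_x}$, the trivial length bound $|[0,N^{-4}]|=N^{-4}$ and the unitarity of $e^{it\Delta^2}$ on $L^2$; another Bernstein then converts this $L^2$-based bound back to the same quantity $N^{4/(d-4)}\|P_NF(u)\|_{L^\infty_tL^{q'}_x}$ produced by the long-time dispersive piece. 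Without this split, your reduction ``to an $s$-uniform multilinear estimate'' does not follow.

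Two further points deserve comment. First, you propose a Littlewood--Paley paraproduct decomposition ``to one distinguished copy of $u$ inside $F(u)=|u|^{\frac{8}{d-4}}u$.'' For the fractional power nonlinearity this is not a well-defined operation: $F$ is not multilinear, and one cannot simply ``single out a factor.'' The paper instead decomposes $F(u)$ by the Fundamental Theorem of Calculus, writing $F(u)-F(u_{N/10\leq\cdot\leq N_0})$ in terms of $u_{<N/10}$ paired with averaged derivatives $F_z$, $F_{\bar z}$, plus error terms containing $u_{>N_0}$. Second, the gain $(N_1/N)^{\alpha}$ in the low-frequency regime does \emph{not} come from Bernstein applied to the low factor $u_{N_1}$ (Bernstein on a low frequency moves in the wrong direction). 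It comes from the H\"older continuity of $F_z$ of order $8/(d-4)$ together with the Besov characterization of H\"older spaces, which supplies a factor $N^{-8/(d-4)}$ for $P_{>N/10}F_z(u)$; combining that with $N^{4/(d-4)}$ from the Duhamel step and $N_1^{4/(d-4)}$ from the definition of $A(N_1)$ produces the claimed gain. You would need to incorporate both of these ingredients for the recurrence to close.
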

\begin{proof} We first give the proof in dimensions $d\geq12$. Fix
$N\leq 10N_0$, by time translation symmetry, it suffices to prove
\begin{equation*}
N^{-\frac{4}{d-4}}\|u_N(0)\|_{L_x^\frac{2d(d-4)}{d^2-8d+8}(\Bbb
R^d)}\lesssim_u
\big(\frac{N}{N_0}\big)^\frac{4}{d-4}+\eta^\frac{8}{d-4}\sum_{\frac{N}{10}\leq
N_1\leq N_0}\big(\frac{N}{N_1}\big)^\frac{4}{d-4}
A(N_1)+\eta^\frac{8}{d-4}\sum_{N_1<\frac{N}{10}}\big(\frac{N_1}{N}\big)^\frac{4}{d-4}
A(N_1).
\end{equation*}
Using Lemma \ref{duh} into the future followed by the triangle
inequality, Bernstein and the dispersive estimate, that is (3.7) in
\cite{Pa}, we estimate
\begin{align}
&N^{-\frac{4}{d-4}}\|u_N(0)\|_{L_x^\frac{2d(d-4)}{d^2-8d+8}(\Bbb
R^d)}\nonumber\\
\lesssim&\
N^{-\frac{4}{d-4}}\Big\|\int_0^{N^{-4}}e^{-it\Delta^2}P_NF(u(t))dt\Big\|_{L_x^\frac{2d(d-4)}{d^2-8d+8}(\Bbb
R^d)}+N^{-\frac{4}{d-4}}\Big\|\int_{N^{-4}}^\infty
e^{-it\Delta^2}P_NF(u(t))dt\Big\|_{L_x^\frac{2d(d-4)}{d^2-8d+8}(\Bbb
R^d)}\nonumber\\
\lesssim&\
N^{2}\Big\|\int_0^{N^{-4}}e^{-it\Delta^2}P_NF(u(t))dt\Big\|_{L_x^2(\Bbb
R^d)}+N^{-\frac{4}{d-4}}\int_{N^{-4}}^\infty
t^{-\frac{d-2}{d-4}}dt\big\|P_NF(u)\big\|_{L_t^\infty
L_x^\frac{2d(d-4)}{d^2-8}(\Bbb R\times\Bbb R^d)}\nonumber\\
\lesssim&\ N^{-2}\|P_N(F(u))\|_{L_t^\infty L_x^2(\Bbb
R^d)}+N^\frac{4}{d-4}\big\|P_NF(u)\big\|_{L_t^\infty
L_x^\frac{2d(d-4)}{d^2-8}(\Bbb R\times\Bbb R^d)}\nonumber\\
\lesssim&\ N^\frac{4}{d-4}\big\|P_NF(u)\big\|_{L_t^\infty
L_x^\frac{2d(d-4)}{d^2-8}(\Bbb R\times\Bbb R^d)}.\label{equ43}
\end{align}
Using the Fundamental Theorem of Calculus, we decompose
\begin{align}\label{equ44}
F(u)=&\ O(|u_{>N_0}||u_{\leq
N_0}|^\frac{8}{d-4})+O(|u_{>N_0}|^\frac{d+4}{d-4})+F(u_{\frac{N}{10}\leq\cdot\leq
N_0})\nonumber\\
&\ +u_{<\frac{N}{10}}\int_0^1F_z(u_{\frac{N}{10}\leq\cdot\leq
N_0}+\theta u_{<\frac{N}{10}})d\theta\\
&\
+\overline{u_{<\frac{N}{10}}}\int_0^1F_{\bar{z}}(u_{\frac{N}{10}\leq\cdot\leq
N_0}+\theta u_{<\frac{N}{10}})d\theta.\nonumber
\end{align}
The contribution to the right-hand side of (\ref{equ43}) coming from
terms that contain at least one copy of $u_{>N_0}$ can be estimated
in the following manner: Using H\"older, Bernstein and the energy
conservation, we have
\begin{align*}
&\ N^\frac{4}{d-4}\big\|P_NO(|u_{>N_0}||u_{\leq
N_0}|^\frac{8}{d-4})\big\|_{L_t^\infty
L_x^\frac{2d(d-4)}{d^2-8}(\Bbb R\times\Bbb R^d)}\\
\lesssim&\ N^\frac{4}{d-4}\big\|u_{>N_0}\big\|_{L_t^\infty
L_x^\frac{2d(d-4)}{d^2-8d+24}(\Bbb R\times\Bbb
R^d)}\big\|u\big\|_{L_t^\infty L_x^\frac{2d}{d-4}(\Bbb R\times\Bbb
R^d)}^\frac{8}{d-4}\\
\lesssim&\
\big(\frac{N}{N_0}\big)^\frac{4}{d-4}\big\||\nabla|^{\frac4{d-4}}u_{>N_0}\big\|_{L_t^\infty
L_x^\frac{2d(d-4)}{d^2-8d+24} (\Bbb R\times\Bbb
R^d)}\big\|u\big\|_{L_t^\infty
L_x^\frac{2d}{d-4}(\Bbb R\times\Bbb R^d)}^\frac{8}{d-4}\\
\lesssim&\ \big(\frac{N}{N_0}\big)^\frac{4}{d-4}.
\end{align*}
Next we turn to the contribution to the right-hand side of
(\ref{equ43}) coming from the last two terms in (\ref{equ44}). It
suffices to consider the first of them since similar arguments can
be used to deal with the second.

First we note that $\Delta u\in L_t^\infty L_x^2$, we have
\begin{equation*}
F_z(u)\in \dot{\Lambda}^{\frac{d(d-4)}{4(d-2)},
\infty}_{\frac{8}{d-4}}.
\end{equation*}
Furthermore, as $P_{>\frac{N}{10}}F_z(u)$ is restricted to high
frequencies, the Besov characterization of the homogeneous H\"older
continuous functions (see \cite{St}, $\S$VI. 7.8) yields
\begin{equation*}
\big\|P_{>\frac{N}{10}}F_z(u)\big\|_{L_t^\infty
L_x^\frac{d(d-4)}{4(d-2)}(\Bbb R\times\Bbb R^d)}\lesssim\
N^{-\frac{8}{d-4}}\|\Delta u\|_{L_t^\infty L_x^2(\Bbb R\times\Bbb
R^d)}^\frac{8}{d-4}.
\end{equation*}
In fact,
\begin{align*}
\big\|P_{>\frac{N}{10}}F_z(u)\big\|_{L_t^\infty
L_x^\frac{d(d-4)}{4(d-2)}(\Bbb R\times\Bbb R^d)}\lesssim&\
\sum_{M>\frac{N}{10}}\big\|P_MF_z(u)\big\|_{L_t^\infty
L_x^\frac{d(d-4)}{4(d-2)}(\Bbb R\times\Bbb R^d)}\\
\lesssim&\ \sum_{M>\frac{N}{10}}M^{-\frac{8}{d-4}}\|\Delta
u\|_{L_t^\infty L_x^2(\Bbb R\times\Bbb R^d)}^\frac{8}{d-4}\\
\lesssim&\ N^{-\frac{8}{d-4}}\|\Delta u\|_{L_t^\infty L_x^2(\Bbb
R\times\Bbb R^d)}^\frac{8}{d-4}.
\end{align*}
Thus, by H\"older and (\ref{equ42}),
\begin{align*}
&N^\frac{4}{d-4}\big\|P_N\big(u_{<\frac{N}{10}}\int_0^1F_z(u_{\frac{N}{10}\leq\cdot\leq
N_0}+\theta u_{<\frac{N}{10}})d\theta\big)\big\|_{L_t^\infty
L_x^\frac{2d(d-4)}{d^2-8}(\Bbb R\times\Bbb R^d)}\\
\lesssim&\ N^\frac{4}{d-4}\big\|u_{<\frac{N}{10}}\big\|_{L_t^\infty
L_x^\frac{2d(d-4)}{d^2-8d+8}(\Bbb R\times\Bbb
R^d)}\big\|P_{>\frac{N}{10}}\big(\int_0^1F_z(u_{\frac{N}{10}\leq\cdot\leq
N_0}+\theta u_{<\frac{N}{10}})d\theta\big)\big\|_{L_t^\infty
L_x^\frac{d(d-4)}{4(d-2)}(\Bbb R\times\Bbb R^d)}\\
\lesssim&\ N^{-\frac{4}{d-4}}\|u_{<\frac{N}{10}}\|_{L_t^\infty
L_x^\frac{2d(d-4)}{d^2-8d+8}(\Bbb R\times\Bbb R^d)}\|\Delta
u_{<N_0}\|_{L_t^\infty
L_x^2(\Bbb R\times\Bbb R^d)}^\frac{8}{d-4}\\
\lesssim&\
\eta^\frac{8}{d-4}\sum_{N_1<\frac{N}{10}}\big(\frac{N_1}{N}\big)^\frac{4}{d-4}A(N_1).
\end{align*}
Hence, the contribution coming from the last two terms in
(\ref{equ43}) is acceptable.

We are left to estimate the contribution of
$F(u_{\frac{N}{10}\leq\cdot\leq N_0})$. We need only show
\begin{equation}\label{equ45}
\big\|F(u_{\frac{N}{10}\leq\cdot\leq N_0})\big\|_{L_t^\infty
L_x^\frac{2d(d-4)}{d^2-8}(\Bbb R\times\Bbb R^d)}\lesssim
\eta^\frac{8}{d-4}\sum_{\frac{N}{10}\leq N_1\leq
N_0}N_1^{-\frac{4}{d-4}}A(N_1).
\end{equation}
As $d\geq12$, we have $\frac{8}{d-4}\leq1$. Using the triangle
inequality, Bernstein, (\ref{equ42}) and H\"older, we estimate
\begin{align*}
&\quad\big\|F(u_{\frac{N}{10}\leq\cdot\leq N_0})\big\|_{L_t^\infty
L_x^\frac{2d(d-4)}{d^2-8}(\Bbb R\times\Bbb R^d)}\\
\lesssim&\ \sum_{\frac{N}{10}\leq N_1\leq
N_0}\big\|u_{N_1}|u_{\frac{N}{10}\leq\cdot\leq
N_0}|^\frac{8}{d-4}\big\|_{L_t^\infty L_x^\frac{2d(d-4)}{d^2-8}(\Bbb
R\times\Bbb R^d)}\\
\lesssim&\ \sum_{\frac{N}{10}\leq N_1, N_2\leq
N_0}\big\|u_{N_1}|u_{N_2}|^\frac{8}{d-4}\big\|_{L_t^\infty
L_x^\frac{2d(d-4)}{d^2-8}(\Bbb R\times\Bbb R^d)}\\
\lesssim&\ \sum_{\frac{N}{10}\leq N_1\leq N_2\leq
N_0}\big\|u_{N_1}\big\|_{L_t^\infty
L_x^\frac{2d(d-4)}{d^2-8d+8}(\Bbb R\times\Bbb
R^d)}\big\|u_{N_2}\big\|_{L_t^\infty L_x^\frac{2d}{d-2}}^\frac{8}{d-4}\\
&\quad\quad+\sum_{\frac{N}{10}\leq N_2\leq N_1\leq
N_0}\big\|u_{N_1}\big\|_{L_t^\infty
L_x^\frac{2d(d-4)^2}{d^3-12d^2+56d-32}(\Bbb R\times\Bbb
R^d)}\big\|u_{N_2}\big\|_{L_t^\infty
L_x^\frac{2d(d-4)}{d^2-8d+8}(\Bbb R\times\Bbb R^d)}^\frac{8}{d-4}\\
\lesssim&\ \sum_{\frac{N}{10}\leq N_1\leq N_2\leq
N_0}\eta^\frac{8}{d-4}N_2^{-\frac{8}{d-4}}\big\|u_{N_1}\big\|_{L_t^\infty
L_x^\frac{2d(d-4)}{d^2-8d+8}(\Bbb R\times\Bbb R^d)}\\
&\quad+\sum_{\frac{N}{10}\leq N_2\leq N_1\leq
N_0}\big\|u_{N_1}\big\|_{L_t^\infty L_x^\frac{2d}{d-2}(\Bbb
R\times\Bbb R^d)}^\frac{8}{d-4}\big\|u_{N_1}\big\|_{L_t^\infty
L_x^\frac{2d(d-4)}{d^2-8d+8}(\Bbb R\times\Bbb
R^d)}^\frac{d-12}{d-4}\big\|u_{N_2}\big\|_{L_t^\infty
L_x^\frac{2d(d-4)}{d^2-8d+8}(\Bbb R\times\Bbb R^d)}^\frac{8}{d-4}\\
\lesssim&\ \eta^\frac{8}{d-4}\sum_{\frac{N}{10}\leq N_1\leq
N_0}N_1^{-\frac{4}{d-4}}A(N_1)\\
&\quad+\eta^\frac{8}{d-4}\sum_{\frac{N}{10}\leq N_2\leq N_1\leq
N_0}N_1^{-\frac{8}{d-4}}\big\|u_{N_1}\big\|_{L_t^\infty
L_x^\frac{2d(d-4)}{d^2-8d+8}(\Bbb R\times\Bbb
R^d)}^\frac{d-12}{d-4}\big\|u_{N_2}\big\|_{L_t^\infty
L_x^\frac{2d(d-4)}{d^2-8d+8}(\Bbb R\times\Bbb R^d)}^\frac{8}{d-4}\\
\lesssim&\ \eta^\frac{8}{d-4}\sum_{\frac{N}{10}\leq N_1\leq
N_0}N_1^{-\frac{4}{d-4}}A(N_1)\\
& +\eta^\frac{8}{d-4}\sum_{\frac{N}{10}\leq N_2\leq N_1\leq N_0}
\left(\frac{N_2}{N_1}\right)^{\frac{64}{(d-4)^2}}
\left(N^{-\frac4{d-4}}_1A(N_1)\right)^{\frac{d-12}{d-4}}
\left(N^{-\frac4{d-4}}_2A(N_2)\right)^{\frac8{d-4}}\\
\lesssim&\ \eta^\frac{8}{d-4}\sum_{\frac{N}{10}\leq N_1\leq
N_0}N_1^{-\frac{4}{d-4}}A(N_1).
\end{align*}
This proves (\ref{equ45}) and so completes the proof of the lemma in
dimensions $d\geq12$.

Consider now $9\leq d<12$. Arguing as for (\ref{equ43}), we have
\begin{equation*}
N^{-\frac{1}{2}}\big\|u_N(0)\big\|_{L_x^\frac{2d}{d-5}(\Bbb
R^d)}\lesssim N^\frac{1}{2}\big\|P_NF(u)\big\|_{L_t^\infty
L_x^\frac{2d}{d+5}(\Bbb R\times\Bbb R^d)},
\end{equation*}
which we estimate by decomposing the nonlinearity as in
(\ref{equ44}). First we have \begin{align*}
&N^\frac{1}{2}\big\|P_NO(|u_{>N_0}||u_{\leq
N_0}|^\frac{8}{d-4})\big\|_{L_t^\infty L_x^\frac{2d}{d+5}(\Bbb
R\times\Bbb R^d)}\\
\lesssim&\ N^\frac{1}{2}\big\|u_{>N_0}\big\|_{L_t^\infty
L_x^\frac{2d}{d-3}(\Bbb R\times\Bbb R^d)}\big\|u\big\|_{L_t^\infty
L_x^\frac{2d}{d-4}(\Bbb R\times\Bbb R^d)}^\frac{8}{d-4}\\
\lesssim&\ \big(\frac{N}{N_0}\big)^\frac{1}{2}.
\end{align*}
Next using Bernstein and Lemma \ref{fcr} together with
(\ref{equ42}), we have
\begin{align*}
&N^\frac{1}{2}\big\|P_N\big(u_{<\frac{N}{10}}\int_0^1F_z(u_{\frac{N}{10}\leq\cdot\leq
N_0}+\theta u_{<\frac{N}{10}})d\theta\big)\big\|_{L_t^\infty
L_x^\frac{2d}{d+5}(\Bbb R\times\Bbb R^d)}\\
\lesssim&\ N^\frac{1}{2}\big\|u_{<\frac{N}{10}}\big\|_{L_t^\infty
L_x^\frac{2d}{d-5}}\big\|P_{>\frac{N}{10}}F_z(u_{\frac{N}{10}\leq\cdot\leq
N_0}+\theta u_{<\frac{N}{10}})\big\|_{L_t^\infty
L_x^\frac{d}{5}(\Bbb R\times\Bbb R^d)}\\
\lesssim&\ N^{-\frac{1}{2}}\big\|u_{<\frac{N}{10}}\big\|_{L_t^\infty
L_x^\frac{2d}{d-5}}\big\|\nabla F_z(u_{\frac{N}{10}\leq\cdot\leq
N_0}+\theta u_{<\frac{N}{10}})\big\|_{L_t^\infty
L_x^\frac{d}{5}(\Bbb R\times\Bbb R^d)}\\
\lesssim&\ N^{-\frac{1}{2}}\big\|u_{<\frac{N}{10}}\big\|_{L_t^\infty
L_x^\frac{2d}{d-5}}\big\|\Delta u_{<N_0}\big\|_{L_t^\infty
L_x^2}^\frac{8}{d-4}\\
\lesssim&\
\eta^\frac{8}{d-4}\sum_{N_1<\frac{N}{10}}\big(\frac{N_1}{N}\big)^\frac{1}{2}A(N_1).
\end{align*}
Finally we estimate
\begin{equation*}
\big\|F(u_{\frac{N}{10}\leq\cdot\leq N_0})\big\|_{L_t^\infty
L_x^\frac{2d}{d+5}(\Bbb R\times\Bbb R^d)}.
\end{equation*}
We denote the maximal integer less than or equal to
$\frac{d+4}{d-4}$ by $k(d)$, then
\begin{align*}
&\big\|F(u_{\frac{N}{10}\leq\cdot\leq N_0})\big\|_{L_t^\infty
L_x^\frac{2d}{d+5}(\Bbb R\times\Bbb R^d)}\\
\lesssim&\ \sum_{\frac{N}{10}\leq N_1, \cdots, N_{k(d)}, M\leq
N_0}\big\|u_{N_1}u_{N_2}\cdots
u_{N_{k(d)}}|u_M|^{\frac{d+4}{d-4}-k(d)}\big\|_{L_t^\infty
L_x^\frac{2d}{d+5}(\Bbb R\times\Bbb R^d)}\\
\lesssim&\ \sum_{\frac{N}{10}\leq N_1, \cdots, N_{k(d)}, M\leq N_0
\atop N_1=\min\{N_1,\cdots, N_{k(d)},
M\}}\big\|u_{N_1}\big\|_{L_t^\infty L_x^\frac{2d}{d-5}(\Bbb
R\times\Bbb R^d)}\prod_{j=2}^{k(d)}\big\|u_{N_j}\big\|_{L_t^\infty
L_x^\frac{8d}{5(d-4)}(\Bbb R\times\Bbb
R^d)}\big\|u_M\big\|_{L_t^\infty L_x^\frac{8d}{5(d-4)}(\Bbb
R\times\Bbb R^d)}^{\frac{d+4}{d-4}-k(d)}\\
&\ \quad+\sum_{\frac{N}{10}\leq N_1, \cdots, N_{k(d)}, M\leq N_0
\atop M=\min\{N_1,\cdots, N_{k(d)}, M\}}\big\|u_M\big\|_{L_t^\infty
L_x^\frac{2d}{d-5}(\Bbb R\times\Bbb
R^d)}^{\frac{d+4}{d-4}-k(d)}\big\|u_{N_1}\big\|_{L_t^\infty
L_x^\frac{2d}{d-5}(\Bbb R\times\Bbb
R^d)}^{1+k(d)-\frac{d+4}{d-4}}\\
&\quad\quad\quad\quad\quad\quad\quad\quad \quad\quad\quad\quad
\quad\quad\quad\quad \times\big\|u_{N_1}\big\|_{L_t^\infty
L_x^\frac{8d}{5(d-4)}(\Bbb R\times\Bbb
R^d)}^{\frac{d+4}{d-4}-k(d)}\prod_{j=2}^{k(d)}\big\|u_{N_j}\big\|_{L_t^\infty
L_x^\frac{8d}{5(d-4)}(\Bbb R\times\Bbb R^d)}\\
\end{align*}
\begin{align*}
\lesssim&\ \eta^\frac{8}{d-4}\sum_{\frac{N}{10}\leq N_1\leq
N_0}N_1^{-\frac{1}{2}}A(N_1)\\
&\ \quad+\sum_{\frac{N}{10}\leq M\leq N_1\leq
N_0}\big(\frac{N_1}{M}\big)^{k(d)+1-\frac{d+4}{d-4}}\big(M^{-\frac{1}{2}}\big\|u_{M}\big\|_{L_t^\infty
L_x^\frac{2d}{d-5}(\Bbb R\times\Bbb
R^d)}\big)^{\frac{d+4}{d-4}-k(d)}\\
&\quad\quad\quad\quad\times\big(N_1^{-\frac{1}{2}}\big\|u_{N_1}\big\|_{L_t^\infty
L_x^\frac{2d}{d-5}(\Bbb R\times\Bbb
R^d)}\big)^{1+k(d)-\frac{d+4}{d-4}}\\
\lesssim&\ \eta^\frac{8}{d-4}\sum_{\frac{N}{10}\leq N_1\leq
N_0}N_1^{-\frac{1}{2}}A(N_1).
\end{align*}
Putting everything together completes the proof of the lemma in the
case of $9\leq d<12$.
\end{proof}
\begin{proposition}[$L^p$ breach of scaling]\label{breach} Let $u$ be as in
Theorem \ref{reg}. Then
\begin{equation*}
u\in L_t^\infty L_x^p \ \ \text{for}\ \ \frac{2d(d+4)}{d^2-8}\leq
p<\frac{2d}{d-4}.
\end{equation*}
In particular, \begin{equation}\label{equ46} F(u)\in L_t^\infty
\dot{\Lambda}_{2}^{r,\infty}\ \ \text{for}\ \
\frac{2d(d+4)(d-4)}{d^3+8d^2-16d-64}\leq r<\frac{2d}{d+8}.
\end{equation}
\end{proposition}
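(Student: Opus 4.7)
The plan is to iterate the recurrence inequality in Lemma~\ref{recur} using the acausal Gronwall estimate of Lemma~\ref{iter}, translate the resulting low-frequency decay of $A(N)$ into $L^p$ bounds on individual Littlewood--Paley pieces $u_N$, and sum dyadically. The Besov-scale claim on $F(u)$ will then follow from a Littlewood--Paley decomposition combined with the fractional chain rule.

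For the iteration, I choose $\eta$ so small that $\eta^{8/(d-4)}<\tfrac{1}{2}(1-2^{-\alpha})$, where $\alpha=\min\{4/(d-4),1/2\}$ is the exponent from Lemma~\ref{recur}. Setting $x_k=A(10\cdot 2^{-k}N_0)$ and $b_k=C\cdot 2^{-k\alpha}$, the recurrence fits exactly the hypothesis of Lemma~\ref{iter}, so one concludes
\begin{equation*}
A(N)\ \lesssim_u\ (N/N_0)^{\alpha'}\qquad\text{for all }N\leq 10N_0,
\end{equation*}
for some $\alpha'\in(0,\alpha)$ that can be made arbitrarily close to $\alpha$ as $\eta\downarrow 0$. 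Unpacking the definition of $A$, this gives, in $d\geq 12$, $\|u_N(t)\|_{L_x^{p_1}}\lesssim N^{4/(d-4)}(N/N_0)^{\alpha'}$ with $p_1=2d(d-4)/(d^2-8d+8)$; in $9\leq d<12$, the analogous bound holds with $p_1=2d/(d-5)$ and exponent $\tfrac12$ in place of $4/(d-4)$.

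To deduce the $L_x^p$ bound on $u$ itself, I interpolate this $L_x^{p_1}$ bound with the dyadic energy bound $\|u_N\|_{L_x^2}\lesssim\eta N^{-2}$ (valid for $N\leq N_0$ by \eqref{equ42}). For any $\theta\in[0,1]$ and $p$ defined by $1/p=\theta/2+(1-\theta)/p_1$, interpolation yields $\|u_N\|_{L_x^p}\lesssim N^{e(\theta)}N_0^{-\alpha'(1-\theta)}$ with $e(\theta)=-2\theta+(4/(d-4)+\alpha')(1-\theta)$. For $\theta$ slightly larger than $2/(d-2)$, the exponent $e(\theta)$ is positive and $p$ is slightly less than $p_0:=2d/(d-4)$, so $\sum_{N\leq N_0}\|u_N\|_{L_x^p}$ converges geometrically and bounds $\|u_{\leq N_0}\|_{L_x^p}$. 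For the high-frequency part, Bernstein together with $\|u_N\|_{L_x^2}\lesssim N^{-2}$ gives $\|u_N\|_{L_x^p}\lesssim N^{d/2-d/p-2}$, whose exponent is negative when $p<p_0$, so $\|u_{>N_0}\|_{L_x^p}$ is also finite. A careful bookkeeping of the permissible range of $\theta$ shows that every $p\in[p_*,p_0)$ with $p_*=2d(d+4)/(d^2-8)$ is attained; all bounds are uniform in $t$, giving $u\in L_t^\infty L_x^p$.

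For the final assertion on $F(u)$, I write $\Delta F(u)\sim F'(u)\Delta u+F''(u)|\nabla u|^2$ formally, to be read through the fractional chain rule of Lemma~\ref{fcr} in the non-smooth regime $d>12$. Estimating $\|P_N F(u)\|_{L_x^r}$ by H\"older, using $\Delta u\in L_x^2$ (or $\nabla u$ via Sobolev embedding from $\dot H^2$) for one factor and the $L_x^p$ bound on $u$ just obtained for the other factor, produces $F(u)\in L_t^\infty\dot\Lambda_2^{r,\infty}$ for $r$ in the range $[r_*,2d/(d+8))$ with $r_*=2d(d+4)(d-4)/(d^3+8d^2-16d-64)$. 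The main obstacle is the nonsmoothness of $F$ in large dimension, which forces a Besov-level interpretation of the chain rule, and the careful tracking of interpolation exponents to verify that the stated ranges $[p_*,p_0)$ and $[r_*,2d/(d+8))$ are genuinely reached.
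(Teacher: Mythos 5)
Your argument follows the paper's proof closely in structure: iterate Lemma~\ref{recur} via the acausal Gronwall estimate of Lemma~\ref{iter} to obtain $A(N)\lesssim_u (N/N_0)^{\alpha'}$ with $\alpha'$ as close to $\alpha$ as desired, interpolate the resulting $L^{p_1}_x$ bound on $u_N$ against an energy-type bound, and then sum dyadically in both the low- and high-frequency regimes. The one genuine (though cosmetic) difference is the interpolation partner: the paper interpolates $L_x^{p_1}$ against $L_x^{2d/(d-2)}$ (via Bernstein and $\|\Delta u\|_{L^2}\lesssim 1$, giving $\|u_N\|_{L^{2d/(d-2)}}\lesssim N^{-1}$), whereas you interpolate against $L_x^2$ using the small-kinetic-energy bound $\|u_N\|_{L_x^2}\lesssim \eta N^{-2}$ from (\ref{equ42}). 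Since the $L^{2d/(d-2)}$ bound is itself a Bernstein consequence of yours, your version is at least as strong; in fact, a quick check shows that at $p=p_*=2d(d+4)/(d^2-8)$ the paper's dyadic exponent degenerates to $0-$, while your choice of partner keeps it strictly positive (for $\eta$ small enough), so your bookkeeping is sound and even cleaner at the lower endpoint, and the admissible $\theta$-range $\theta<4/d$ does cover all of $[p_*,p_0)$ as you claim. For the Besov assertion (\ref{equ46}), both you and the paper leave this at the level of a sketch; the paper says ``by paraproduct and H\"older'' and you say ``Besov-level interpretation of the chain rule'' with the formal identity $\Delta F(u)\sim F'(u)\Delta u+F''(u)|\nabla u|^2$. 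You are right to flag that $F$ is only $C^1$ (not $C^2$) when $d>12$, so the $F''$ term is genuinely formal and a paraproduct/Besov argument is needed; the exponent arithmetic you indicate ($1/r=8/((d-4)p)+1/2$, which sends $[p_*,p_0)$ onto $[r_*,2d/(d+8))$) does match the stated range. Overall this is a correct proof that is essentially the paper's, modulo the harmless swap of interpolation endpoint.
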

\begin{proof} We only present the details for $d\geq12$. The
treatment of $9\leq d<12$ is completely analogous. Combining Lemma
\ref{recur} and Lemma \ref{iter}, we deduce
\begin{equation}\label{equ47}
\big\|u_N\big\|_{L_t^\infty L_x^\frac{2d(d-4)}{d^2-8d+8}(\Bbb
R\times\Bbb R^d)}\lesssim_u N^{\frac{8}{d-4}-}\ \ \text{for all}\ \
N\leq 10N_0
\end{equation}
In applying Lemma \ref{iter}, we set $N=10\cdot 2^{-k}N_0$,
$x_k=A(10\cdot 2^{-k}N_0)$ and take $\eta$ sufficiently small.

By interpolation followed by (\ref{equ47}), Bernstein and energy
conservation,
\begin{align*}
\big\|u_N\big\|_{L_t^\infty L_x^p}\lesssim&\
\big\|u_N\big\|_{L_t^\infty
L_x^\frac{2d(d-4)}{d^2-8d+8}}^{(d-4)(\frac{d-2}{2d}-\frac{1}{p})}\big\|u_N\big\|_{L_t^\infty
L_x^\frac{2d}{d-2}}^{1-(d-4)(\frac{d-2}{2d}-\frac{1}{p})}\\
\lesssim&\
N^{8(\frac{d-2}{2d}-\frac{1}{p})-}N^{-1+(d-4)(\frac{d-2}{2d}-\frac{1}{p})}\\
\lesssim&_u N^{\frac{d}{2}-\frac{4}{d}-\frac{d+4}{p}-}
\end{align*}
for all $N\leq 10N_0$. Then using Bernstein, we have
\begin{align*}
\big\|u\big\|_{L_t^\infty L_x^p}\lesssim&\ \big\|u_{\leq
N_0}\big\|_{L_t^\infty L_x^p}+\big\|u_{>N_0}\big\|_{L_t^\infty
L_x^p}\\
\lesssim&_u \sum_{N\leq
N_0}N^{\frac{d}{2}-\frac{4}{d}-\frac{d+4}{p}-}+\sum_{N>N_0}N^{\frac{d-4}{2}-\frac{d}{p}}\\
\lesssim&_u 1.
\end{align*}
(\ref{equ46}) follows by paraproduct and H\"oder inequality.
\end{proof}
\begin{proposition}[Some negative regularity]\label{neg} Let $d\geq9$ and let
$u$ be as in Theorem \ref{reg}. Assume further that $F(u)\in
L_t^\infty \dot{\Lambda}_s^{r,\infty}$ for some
$\frac{2d(d+4)(d-4)}{d^3+8d^2-16d-64}\leq r<\frac{2d}{d+8}$ and some
$0\leq s\leq 2$. Then there exists $s_0=s_0(r,d)>0$ such that $u\in
L_t^\infty \dot{H}^{s-s_0+}$.
\end{proposition}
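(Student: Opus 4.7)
The plan is to apply the double Duhamel formula from Lemma~\ref{duh}, once forward and once backward in time, to produce a quadratic $TT^{*}$-style expression for $\|P_N u(t)\|_{L^2_x}^2$, and then to harvest a positive power of $N$ at low frequencies from the $L^r$-$L^{r'}$ dispersive decay of the biharmonic propagator. The upper restriction $r<\tfrac{2d}{d+8}$ in the hypothesis is precisely what renders the resulting double time integral absolutely convergent, which (after summing dyadically) provides the gain $s_0>0$.

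Concretely, for a fixed low frequency $N\leq 1$, I would combine Lemma~\ref{duh} applied to the two halves of the time line to obtain
\begin{equation*}
\|P_N u(t)\|_{L^2_x}^2 \;=\; \int_{t}^{\infty}\!\!\int_{-\infty}^{t}\bigl\langle P_N F(u(t')),\,e^{i(t'-t'')\Delta^2}P_N F(u(t''))\bigr\rangle_{L^2_x}\,dt''\,dt',
\end{equation*}
pair the two factors via H\"older with exponents $(r,r')$, and invoke the biharmonic dispersive estimate
\begin{equation*}
\|e^{i\tau\Delta^2} g\|_{L^{r'}_x}\lesssim |\tau|^{-\frac{d}{4}(\frac{2}{r}-1)}\|g\|_{L^r_x},\qquad 1<r\leq 2,
\end{equation*}
obtained by interpolating the $L^1\!\to\! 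L^\infty$ kernel bound $|\tau|^{-d/4}$ against the $L^2$ isometry. I would then split the domain of integration into the short-time region $|t'-t''|\leq N^{-4}$ and the long-time region $|t'-t''|>N^{-4}$. On the former, I would replace the dispersive estimate by Bernstein plus $L^2$ unitarity, producing the factor $N^{d(2/r-1)}$ times a temporal area $O(N^{-8})$; on the latter, the change of variables $u=(t'-t)+(t-t'')$, $v=t'-t$ reduces the double integral to $\int_{N^{-4}}^{\infty}u^{1-\frac{d}{4}(2/r-1)}\,du$, which converges exactly because $r<\tfrac{2d}{d+8}$ forces the exponent strictly below $-1$, and evaluates to a constant times $N^{d(2/r-1)-8}$. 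In both regions the Besov hypothesis $\|P_N F(u)\|_{L^\infty_t L^r_x}\lesssim N^{-s}$ from $F(u)\in L^\infty_t\dot{\Lambda}_s^{r,\infty}$ yields the uniform low-frequency bound
\begin{equation*}
\|P_N u\|_{L^\infty_t L^2_x}^2\lesssim N^{d(2/r-1)-8-2s}=N^{2(s_0-s)},\qquad s_0:=\tfrac{d}{2}\bigl(\tfrac{2}{r}-1\bigr)-4>0.
\end{equation*}

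Finally I would assemble $\|u\|_{L^\infty_t\dot{H}^{s-s_0+\epsilon}}^2$ as a Littlewood-Paley square sum. For $N\leq 1$ the bound above gives $N^{2(s-s_0+\epsilon)}\|P_N u\|_{L^2_x}^2\lesssim N^{2\epsilon}$, summable over dyadic $N\to 0$; for $N>1$ the trivial bound $\|P_N u\|_{L^2_x}\lesssim N^{-2}\|\Delta u\|_{L^2_x}\lesssim N^{-2}$ from energy conservation makes $\sum_{N>1}N^{2(s-s_0+\epsilon-2)}$ geometrically convergent, since $s-s_0+\epsilon<2$ for $\epsilon$ sufficiently small. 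This gives $u\in L^\infty_t\dot{H}^{s-s_0+\epsilon}$ for every small $\epsilon>0$, as required. The main obstacle is to justify the interchange of integration orders in the double Duhamel pairing: one needs absolute convergence of the double time integral in order to apply Fubini to a formula whose factors are only defined as weak $\dot{H}^2$ limits, and this is exactly what the strict inequality $r<\tfrac{2d}{d+8}$ (equivalently, the dispersive exponent $\tfrac{d}{4}(2/r-1)>2$) is designed to supply.
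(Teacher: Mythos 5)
Your proof is correct and follows essentially the same route as the paper's: double Duhamel in both time directions, $TT^*$ pairing of $P_N F(u)$ factors via H\"older, the $L^r\to L^{r'}$ biharmonic dispersive bound with decay $|t-\tau|^{-\frac{d}{2}(\frac{1}{r}-\frac{1}{2})}$ interpolated against the Bernstein/$L^2$-unitarity bound, a split at the scale $|t-\tau|\sim N^{-4}$, and a Littlewood--Paley summation using energy conservation at high frequencies. The only cosmetic differences are that you fix an arbitrary $t$ rather than translating to $t=0$ and absorb the weight $N^{-s}$ through the Besov bound on $\|P_N F(u)\|_{L^r}$ rather than carrying $|\nabla|^s$ through the computation, and you spell out the change of variables behind the paper's one-line remark that $r<\frac{2d}{d+8}$ forces convergence.
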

\begin{proof} It suffices to prove that
\begin{equation}\label{equ48}
\big\||\nabla|^s u_N\big\|_{L_t^\infty L_x^2}\lesssim_u N^{s_0}\ \
\text{for all}\ \ N>0\ \text{and}\ \
s_0=\frac{d}{r}-\frac{d}{2}-4>0.
\end{equation}
Indeed, by Bernstein combined with energy conservation,
\begin{align*}
\big\||\nabla|^{s-s_0+} u_N\big\|_{L_t^\infty L_x^2}\leq&\
\big\||\nabla|^{s-s_0+} u_{\leq1}\big\|_{L_t^\infty
L_x^2}+\big\||\nabla|^{s-s_0+} u_{>1}\big\|_{L_t^\infty L_x^2}\\
\lesssim&_u\sum_{N\leq1}N^{0+}+\sum_{N>1}N^{(s-s_0+)-2}\\
\lesssim&_u1.
\end{align*}
We are left to prove (\ref{equ48}). By time-translation symmetry, it
suffices to prove
\begin{equation*}
\big\||\nabla|^s u_N(0)\big\|_{L_x^2}\lesssim_u N^{s_0}\ \ \text{for
all}\ N>0 \ \ \text{and}\ \ s_0=\frac{d}{r}-\frac{d+8}{2}>0.
\end{equation*}
Using the Duhamel formula (\ref{duh}) both in the future and in the
past, we write
\begin{align*}
&\ \big\||\nabla|^s u_N(0)\big\|_{L_x^2}^2\\
=&\ \lim_{T\rightarrow\infty}\lim_{T'\rightarrow-\infty}\Big\langle
i\int_0^Te^{-it\Delta^2}P_N|\nabla|^sF(u(t))dt,
-i\int_{T'}^0e^{-i\tau\Delta^2}P_N|\nabla|^sF(u(\tau))d\tau\Big\rangle\\
\leq&\int_0^{+\infty}\int_{-\infty}^0\big|\langle
P_N|\nabla|^sF(u(t)),
e^{i(t-\tau)\Delta^2}P_N|\nabla|^sF(u(\tau))\rangle\big|dtd\tau.
\end{align*}
We estimate the term inside the integrals in two ways. On one hand,
using H\"older and the dispersive estimate, \begin{align*}
&\big|\langle P_N|\nabla|^sF(u(t)),
e^{i(t-\tau)\Delta^2}P_N|\nabla|^sF(u(\tau))\rangle\big|\\
\lesssim&\
\big\|P_N|\nabla|^sF(u(t))\big\|_{L_x^r}\big\|e^{i(t-\tau)\Delta^2}P_N|\nabla|^sF(u(\tau))\big\|_{L_x^{r'}}\\
\lesssim&|t-\tau|^{-\frac{d}{2}(\frac{1}{r}-\frac{1}{2})}\big\|F(u(t))\big\|_{\dot{\Lambda}_s^{r,\infty}}^2.
\end{align*}
On the other hand, using Bernstein,
\begin{align*}
&\big|\langle P_N|\nabla|^sF(u(t)),
e^{i(t-\tau)\Delta^2}P_N|\nabla|^sF(u(\tau))\rangle\big|\\
\lesssim&\
\big\|P_N|\nabla|^sF(u(t))\big\|_{L_x^2}\big\|e^{i(t-\tau)\Delta^2}P_N|\nabla|^sF(u(\tau))\big\|_{L_x^2}\\
\lesssim&\
N^{2d(\frac{1}{r}-\frac{1}{2})}\big\|F(u(t))\big\|_{\dot{\Lambda}_s^{r,\infty}}^2.
\end{align*}
Thus,
\begin{align*}
\big\||\nabla|^s u_N(0)\big\|^2_{L_x^2}\lesssim&
\big\|F(u(t))\big\|_{\dot{\Lambda}_s^{r,\infty}}^2\int_0^\infty\int_{-\infty}^0
\min\{|t-\tau|^{-\frac{d}{2}(\frac{1}{r}-\frac{1}{2})},
N^{-2d(\frac{1}{2}-\frac{1}{r})}\}dtd\tau\\
\lesssim&
N^{2s_0}\big\|F(u(t))\big\|_{\dot{\Lambda}_s^{r,\infty}}^2,
\end{align*}
where we use the fact that $r<\frac{2d}{d+8}$. It's here that the
dimension restriction is imposed.
\end{proof}

{\it Proof of Theorem \ref{reg}.} Proposition \ref{breach} allows us
to apply Proposition \ref{neg} with $s=2$. We conclude that $u\in
L_t^\infty\dot{H}^{2-s_0+}$ for some $s_0=s_0(r,d)>0$. Thus we
deduce that $F(u)\in L_t^\infty \dot{\Lambda}_{2-s_0}^{r,\infty}$
for some $\frac{2d(d+4)(d-4)}{d^3+8d^2-16d-64}\leq
r<\frac{2d}{d+8}$. We are thus in the position to apply Proposition
\ref{neg} again and obtain $u\in L_t^\infty \dot{H}^{2-2s_0+}$.
Iterating this procedure finitely many times, we derive $u\in
L_t^\infty\dot{H}^{-\varepsilon}$ for some $0<\varepsilon<s_0$.

This completes the proof of Theorem \ref{reg}.
\section{The low-to-high frequency cascade}
In this section, we use the negative regularity provided by Theorem
\ref{reg} to preclude low-to-high frequency cascade solutions.
\begin{theorem}[Absence of cascades]\label{lth} Let $d\geq9$. There are no
global solutions to (\ref{fnls}) that are low-to-high frequency
cascades in the sense of Theorem \ref{enemies}.
\end{theorem}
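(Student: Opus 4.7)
The plan is to combine the negative regularity provided by Theorem \ref{reg} with conservation of mass to force $u\equiv 0$, which contradicts the condition $S_I(u)=\infty$ built into the blowup solutions of Theorem \ref{enemies}. A low-to-high cascade satisfies $\inf_t N(t)\geq 1$, so Theorem \ref{reg} applies and yields some $\epsilon=\epsilon(d)>0$ with $u\in L^\infty_t\dot{H}^{-\epsilon}_x\cap L^\infty_t\dot{H}^2_x$; in particular $u\in L^\infty_t L^2_x$. Since $u\in L^\infty_t H^2_x$, the mass $M(u(t))=\|u(t)\|_{L^2_x}^2$ is conserved under the fourth-order flow.

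The cascade hypothesis $\limsup_{t\to\infty}N(t)=\infty$ allows me to fix a sequence $t_n\to\infty$ with $N(t_n)\to\infty$. The heart of the argument is to show $\|u(t_n)\|_{L^2_x}\to 0$; combined with conservation of mass this will force $u\equiv 0$. For each $\eta>0$, I would decompose $\|u(t_n)\|_{L^2_x}^2=\int|\hat u(t_n,\xi)|^2\,d\xi$ into three frequency regions tied to the almost-periodicity scales: the low band $|\xi|\leq c(\eta)N(t_n)$, the middle annulus $c(\eta)N(t_n)<|\xi|\leq C(\eta)N(t_n)$, and the high tail $|\xi|>C(\eta)N(t_n)$, where $c(\eta)$ comes from \eqref{cc} and $C(\eta)$ from \eqref{equ52}. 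The middle piece contributes at most a constant multiple of $(c(\eta)N(t_n))^{-4}E(u)$ by Plancherel and finite energy, while the high piece contributes at most $(C(\eta)N(t_n))^{-4}\eta$ by the same trick together with \eqref{equ52}; both vanish as $n\to\infty$ because $N(t_n)\to\infty$.

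The low band is where the negative regularity is essential. Setting $\theta=2/(\epsilon+2)$, the pointwise identity $|\hat u|^2=(|\xi|^{-\epsilon}|\hat u|)^{2\theta}(|\xi|^2|\hat u|)^{2(1-\theta)}$ (with $\theta$ chosen so that the powers of $|\xi|$ cancel) together with H\"older's inequality at exponents $1/\theta$ and $1/(1-\theta)$ yields
\begin{equation*}
\int_{|\xi|\leq c(\eta)N(t_n)}|\hat u(t_n,\xi)|^2\,d\xi\leq \|u\|_{L^\infty_t\dot{H}^{-\epsilon}_x}^{2\theta}\,\eta^{1-\theta},
\end{equation*}
where \eqref{cc} controls the second factor. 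Summing the three contributions, first sending $n\to\infty$ and then $\eta\to 0$, gives $\|u(t_n)\|_{L^2_x}\to 0$. Mass conservation then forces $u\equiv 0$, contradicting $S_I(u)=\infty$. The only nontrivial obstacle, the negative regularity, has already been resolved in the previous section by Theorem \ref{reg}; once it is available, the cascade is killed by this elementary frequency-localization and interpolation argument.
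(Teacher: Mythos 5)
Your proof is correct and follows essentially the same route as the paper: invoke Theorem \ref{reg} to get $u\in L^\infty_t\dot H^{-\epsilon}_x\cap L^\infty_t L^2_x$, use the compactness property \eqref{cc} together with an interpolation between $\dot H^{-\epsilon}$ and $\dot H^2$ on the low frequency band $|\xi|\leq c(\eta)N(t)$ to get a bound of the form $\eta^{\epsilon/(\epsilon+2)}$, use Plancherel and finite energy to make the high frequency contribution vanish along $t_n$ with $N(t_n)\to\infty$, and then conclude $M(u)=0$ from mass conservation. The only cosmetic difference is that you split the high frequency part into a middle annulus and a tail controlled by \eqref{equ52}, whereas the paper bounds the entire region $|\xi|>c(\eta)N(t)$ in one stroke by $[c(\eta)N(t)]^{-4}E(u)$; your extra decomposition is harmless but unnecessary, since the middle-annulus bound already dominates. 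Your exponent $\theta=2/(\epsilon+2)$ is exactly the paper's H\"older exponent, and the chain of limits (first $n\to\infty$, then $\eta\to 0$) is the same; the conclusion $u\equiv 0$ contradicting $S_I(u)=\infty$ closes the argument correctly.
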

\begin{proof}
Suppose for a contradiction that there existed such a solution $u$.
Then by Theorem \ref{reg}, $u\in L_t^\infty L_x^2$. Thus by the
conservation of mass,
\begin{equation*}
0\leq M(u)=M(u(t))=\int_{\Bbb R^d}|u(t,x)|^2dx<\infty\ \ \text{for
all}\ \ t\in\Bbb R.
\end{equation*}
Fix $t\in\Bbb R$ and let $\eta>0$ be a small constant. By
compactness, \begin{equation*} \int_{|\xi|\leq
c(\eta)N(t)}|\xi|^4|\hat{u}(t,\xi)|^2d\xi\leq\eta.
\end{equation*}
On the other hand, as $u\in L_t^\infty\dot{H}^{-\varepsilon}$ for
some $\varepsilon>0$,
\begin{equation*}
\int_{|\xi|\leq
c(\eta)N(t)}|\xi|^{-2\varepsilon}|\hat{u}(t,\xi)|^2d\xi\lesssim_u1.
\end{equation*}
Hence, by H\"older,
\begin{equation*}
\int_{|\xi|\leq c(\eta)N(t)}|\hat{u}(t,\xi)|^2d\xi\lesssim_u
\eta^\frac{\varepsilon}{2+\varepsilon}.
\end{equation*}
Meanwhile,
\begin{align*}
\int_{|\xi|\geq c(\eta)N(t)}|\hat{u}(t,\xi)|^2d\xi\leq&
[c(\eta)N(t)]^{-4}\int_{\Bbb R^d}|\xi|^4|\hat{u}(t,\xi)|^2d\xi\\
\leq&[c(\eta)N(t)]^{-4}E(u).
\end{align*}
Therefore, we obtain
\begin{equation*}
0\leq M(u)\lesssim_u
c(\eta)^{-4}N(t)^{-4}+\eta^\frac{\varepsilon}{2+\varepsilon}
\end{equation*}
for all $t\in\Bbb R$. As $u$ is a low-to-high cascade, there is a
sequence of times $t_n\rightarrow\infty$ so that
$N(t_n)\rightarrow\infty$. As $\eta>0$ is arbitrary, we may conclude
$M(u)=0$ and $u\equiv0$. This concludes the fact that
$S_I(u)=\infty$, thus settling Theorem \ref{lth}.
\end{proof}
\section{Soliton-like solutions}
In this section, we preclude the soliton-like solutions, namely, we
prove
\begin{theorem}[Absence of solitons]\label{soliton} Let $d\geq9$. There are no
global solutions to (\ref{fnls}) that are solitons in the sense of
Theorem \ref{enemies}.
\end{theorem}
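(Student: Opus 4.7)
The plan is to leverage the negative regularity theorem of the previous section in concert with an interaction Morawetz estimate, obtaining a finite spacetime bound on some $L^q_{t,x}$-type norm of $u$ that directly contradicts a matching uniform-in-$t$ lower bound coming from almost periodicity. Since a soliton has $N(t)\equiv 1\geq 1$, Theorem \ref{reg} applies directly and gives $u\in L_t^\infty \dot{H}^{-\epsilon}_x$ for some $\epsilon=\epsilon(d)>0$. Interpolating with $u\in L_t^\infty \dot{H}^2_x$ (immediate from energy conservation) yields $u\in L_t^\infty H^2_x$ and in particular $M(u)<\infty$.

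The first substantive step is to establish a uniform lower bound $\|u(t)\|_{L^p_x}\gtrsim_u 1$ for every $p\in(1,\infty)$ and every $t\in\Bbb R$. Because $u$ is a minimal-kinetic-energy blowup solution, $\|\Delta u(t)\|_{L_x^2}$ is bounded below by a positive constant; combined with almost periodicity modulo $G$ (with the scale pinned at $N(t)=1$, only translations and phases vary), the orbit $\{e^{-i\theta(t)}u(t,\cdot+x(t)):t\in\Bbb R\}$ is precompact in $\dot{H}^2_x$ and bounded away from $0$. Using Sobolev embedding together with the decay provided by (\ref{equ51}) and (\ref{pot}), the precompactness passes to $L^p_x$ for the full range $1<p<\infty$, so no limit point can vanish. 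Hence there is $c_u>0$ with $\int_{\Bbb R^d}|u(t,x)|^p\,dx\geq c_u$ uniformly in $t$, which after integration in time forces
\[
\int_{\Bbb R}\int_{\Bbb R^d}|u(t,x)|^p\,dx\,dt=\infty.
\]

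The second substantive step is a global interaction Morawetz estimate tailored to the biharmonic dispersion. With $u\in L_t^\infty H^2_x$ at our disposal, there is no need for the frequency- or space-localization used in \cite{ckstt:gwp}; one computes two time derivatives of a pair quantity built from the weight $|x-y|$ applied to the two-particle density $|u(t,x)|^2|u(t,y)|^2$. The commutator with $\Delta^2$ generates, alongside a nonnegative bulk term proportional to some $\iint|u|^q\,dx\,dt$, a collection of higher-derivative and lower-order pieces. The key computation is to absorb these error terms using $M(u)$ and $\|\Delta u\|_{L_t^\infty L_x^2}$ via Hardy-type inequalities; then integration by parts in $t$ and Cauchy--Schwarz yield
\[
\int_{\Bbb R}\int_{\Bbb R^d}|u(t,x)|^q\,dx\,dt\lesssim_u 1
\]
for an explicit $q\in(1,\infty)$. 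Choosing $p=q$ in the first step contradicts this finite bound, and so no soliton-like solution can exist.

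The principal obstacle is the interaction Morawetz computation itself: because $\Delta^2$ produces four derivatives rather than two, the resulting identity has many more cross terms involving $\nabla u$ and $\Delta u$ than the classical NLS analogue, and isolating a nonnegative bulk while dominating every error by $M(u)^a\|\Delta u\|_{L_t^\infty L_x^2}^b$ demands a delicate choice of weight and repeated use of Hardy's inequality. It is precisely membership in $L_t^\infty H^2_x$, rather than merely $L_t^\infty \dot{H}^2_x$, which renders these error estimates tractable and makes the whole argument go through.
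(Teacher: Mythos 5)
Your overall strategy is the same as the paper's: use negative regularity (Theorem~\ref{reg}) to put $u\in L_t^\infty H_x^2$, establish a uniform-in-$t$ lower bound on $\|u(t)\|_{L_x^p}$, invoke an interaction Morawetz estimate, and derive a contradiction by integrating the lower bound in time. Two points, however, deserve scrutiny.

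First, and most importantly, your description of the Morawetz output is wrong for $d\geq 9$. For the biharmonic flow in dimension $d>5$, the nonnegative bulk terms produced by the interaction Morawetz identity are \emph{not} of the form $\iint |u(t,x)|^q\,dx\,dt$; they are convolution-weighted quantities, namely (cf.\ Proposition~\ref{ime})
\[
\int_{\Bbb R}\iint_{\Bbb R^d\times\Bbb R^d}\frac{|u(t,x)|^2|u(t,y)|^2}{|x-y|^5}\,dx\,dy\,dt
\quad\text{and}\quad
\int_{\Bbb R}\iint_{\Bbb R^d\times\Bbb R^d}\frac{|u(t,y)|^2|u(t,x)|^{2d/(d-4)}}{|x-y|}\,dx\,dy\,dt,
\]
which amount to a bound on $\big\||\nabla|^{-(d-5)/2}|u|^2\big\|_{L^2_{t,x}}$, not on a spacetime Lebesgue norm of $u$. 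The passage from this nonlocal quantity to a genuine $L_t^q L_x^r$ bound is an extra step that you have elided: the paper interpolates against $\|\nabla|u|^2\|_{L_t^\infty L_x^{d/(d-3)}}\lesssim_u 1$ (a consequence of Sobolev embedding and energy conservation) to obtain $u\in L_t^{2(d-3)}L_x^{2d(d-3)/(d^2-7d+15)}$. Only at that point can one integrate the $L_x^p$ lower bound over all of $\Bbb R$ in $t$ and reach a contradiction. Without this interpolation, your claimed bound $\iint|u|^q\,dx\,dt\lesssim_u 1$ simply does not follow from the Morawetz identity in these dimensions (it does for $d=5$, where the $|x-y|^{-5}$ weight degenerates to a delta, but not here).

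Second, your route to the uniform $L_x^p$ lower bound (precompactness of the orbit in $L_x^p$ together with the $\dot H^2$ lower bound) is plausible and genuinely different from the paper's. The paper first proves a lower bound in $L_x^{2d/(d-4)}$ by a Duhamel--Strichartz contradiction argument (if the potential energy vanished along a sequence, then $u\equiv 0$), and then propagates to all $1<p<\infty$ via interpolation, Bernstein, and the frequency-localization inherent in almost periodicity. Your compactness argument buys a more conceptual proof for $2\leq p<2d/(d-4)$ (which is the range actually needed), at the cost of having to justify precompactness of the orbit in $L_x^p$ carefully; the paper's version is more quantitative and also covers $p\notin[2,2^\#]$ directly.

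In summary: same architecture, one genuine gap. You must add the interpolation step converting the convolution-type Morawetz bound into an $L_t^q L_x^r$ bound before the contradiction with the $L_x^p$ lower bound can be drawn.
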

First we prove that the potential cannot be very small.
\begin{proposition}[Potential energy bounded from below]\label{pe} Let $u$ be
the soliton-like solutions in the sense of Theorem \ref{enemies},
then we have
\begin{equation*}
\inf_{t\in\Bbb R}\big\|u(t,x)\big\|_{L_x^\frac{2d}{d-4}}>0.
\end{equation*}
\end{proposition}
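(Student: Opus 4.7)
The plan is to argue by contradiction. Assume $\inf_{t\in\Bbb R}\|u(t)\|_{L_x^{2d/(d-4)}}=0$ and pick a sequence $t_n\in\Bbb R$ with $\|u(t_n)\|_{L_x^{2d/(d-4)}}\to 0$. Set $v_n(x):=u(t_n,x+x(t_n))$; translation invariance of $L^p$ norms gives
\[
\|v_n\|_{L_x^{2d/(d-4)}}\longrightarrow 0.\qquad(\dagger)
\]
The strategy is: (i) use the soliton hypothesis to extract a strong $\dot H^2_x$-limit of a subsequence of $\{v_n\}$; (ii) use $(\dagger)$ together with the Sobolev embedding to identify this limit with $0$; (iii) deduce $\|\Delta u(t_n)\|_{L^2}\to 0$ and then close the loop by energy conservation.

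\textbf{Step 1 (precompactness of the translated orbit).} Because $u$ is soliton-like, $N(t)\equiv 1$, and the bounds (\ref{equ51}), (\ref{equ52}) applied at $t=t_n$ and shifted by $x\mapsto x+x(t_n)$ give, uniformly in $n$,
\[
\int_{|x|\geq C(\eta)}|\Delta v_n(x)|^2\,dx\leq\eta,\qquad \int_{|\xi|\geq C(\eta)}|\xi|^4|\hat v_n(\xi)|^2\,d\xi\leq\eta.
\]
Combined with the uniform bound $\|\Delta v_n\|_{L^2}=\|\Delta u(t_n)\|_{L^2}\leq\sqrt{2E(u)}$, the Ascoli--Arzel\`a characterization of $\dot H^2_x$-precompact families recalled right after Definition \ref{d1} yields, along a subsequence (still denoted $v_n$), convergence $v_n\to v_\infty$ strongly in $\dot H^2_x$.

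\textbf{Step 2 (identification and conclusion).} By the Sobolev embedding $\dot H^2_x\hookrightarrow L_x^{2d/(d-4)}$ the convergence $v_n\to v_\infty$ persists in $L_x^{2d/(d-4)}$; hence $(\dagger)$ forces $v_\infty=0$. Therefore $\|\Delta u(t_n)\|_{L^2}=\|\Delta v_n\|_{L^2}\to 0$ along the subsequence, and so
\[
E(u(t_n))=\tfrac{1}{2}\|\Delta u(t_n)\|_{L^2}^{2}+\tfrac{d-4}{2d}\|u(t_n)\|_{L^{2d/(d-4)}}^{2d/(d-4)}\longrightarrow 0.
\]
Conservation of energy then gives $E(u)\equiv 0$, hence $u\equiv 0$, which contradicts $S_{\Bbb R}(u)=\infty$ from Theorem \ref{enemies}.

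\textbf{Main obstacle.} The argument is essentially soft once Step 1 is in hand. The soliton hypothesis enters there in an essential way: $N(t)\equiv 1$ kills the dilation degree of freedom (otherwise mass in the limit could slip to zero or infinite frequency and the $\dot H^2$-limit need not capture the $L_x^{2d/(d-4)}$-limit), while the phase $\theta(t)$ acts trivially on $\|\Delta\,\cdot\,\|_{L^2}$. This is precisely why the same reduction fails for the low-to-high cascade scenario of Theorem \ref{enemies}, where one is forced to bring in the negative regularity of Theorem \ref{reg} instead.
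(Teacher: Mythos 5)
Your proof is correct, and it takes a genuinely different route from the paper.

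The paper's argument first converts smallness of $\|u(t_n)\|_{L_x^{2d/(d-4)}}$ along a sequence $t_n$ into smallness of $\|u\|_{L_t^\infty L_x^{2d/(d-4)}}$ on a whole half-line (or neighbourhood of a finite time), and then runs a standard uniqueness-type bootstrap: using the backward Duhamel formula (Lemma \ref{duh}) and Strichartz estimates it obtains $\|u\|_{L_t^2L_x^{2d/(d-4)}(\tilde{I}\times\Bbb R^d)}\lesssim\varepsilon^{8/(d-4)}\|u\|_{L_t^2L_x^{2d/(d-4)}(\tilde{I}\times\Bbb R^d)}$, forcing $u\equiv 0$ on $\tilde{I}$, and then energy conservation extends this to all of $\Bbb R$. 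Your argument instead exploits the compactness of the translated orbit directly: since $N(t)\equiv 1$, the family $v_n=u(t_n,\cdot+x(t_n))$ is precompact in $\dot H^2_x$, the strong $\dot H^2_x$-limit must vanish because of $(\dagger)$ and Sobolev embedding, and then energy conservation kills $u$ outright. This is more elementary and self-contained; in particular it avoids invoking the Duhamel/Strichartz machinery. It also bypasses a step in the paper that is not fully justified by continuity alone (passing from smallness at the discrete times $t_n$ to uniform smallness on an interval $\tilde{I}$ as $t_n\to\pm\infty$): your compactness argument handles that transition transparently. One small correction to your ``main obstacle'' remarks: the compactness reduction does not genuinely fail for the low-to-high cascade. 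If one rescales $v_n(x):=N(t_n)^{-(d-4)/2}u(t_n,x/N(t_n)+x(t_n))$, the orbit is again precompact in $\dot H^2_x$ while the $L_x^{2d/(d-4)}$ norm is scale-invariant, so the same conclusion follows for any almost periodic solution with $I=\Bbb R$ (indeed the paper notes after the statement that the proof works for any almost periodic solution). This does not affect the validity of your proof of the stated proposition, which only concerns the soliton.
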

\begin{proof} Suppose for a contradiction that $\inf_{t\in\Bbb
R}\big\|u(t,x)\big\|_{L_x^\frac{2d}{d-4}}=0$. Then there exists a
sequence $\{t_n\}$ such that
\begin{equation}\label{equ61}
\lim_{n\rightarrow\infty}\big\|u(t_n,x)\big\|_{L_x^\frac{2d}{d-4}}=0,
\end{equation}
where $t_n\rightarrow0$ (up to time translation) or
$t_n\rightarrow\pm\infty$.

Since $u\in C_t^0(\Bbb R, \dot{H}^2_x(\Bbb R^d))$, for any
$\varepsilon>0$, there exists an interval $\tilde{I}$
($\tilde{I}=(a, +\infty)$ if $t_n\rightarrow+\infty$;
$\tilde{I}=(-\infty, b)$ if $t_n\rightarrow-\infty$), such that
\begin{equation}
\big\|u(t,x)\big\|_{L_t^\infty
L_x^\frac{2d}{d-4}(\tilde{I}\times\Bbb R^d)}<\varepsilon.
\end{equation}
Using Lemma \ref{duh} and Strichartz estimates, we have
\begin{align*}
\big\|u\big\|_{L_t^2L_x^\frac{2d}{d-4}(\tilde{I}\times\Bbb
R^d)}\lesssim&\
\big\|i\int_t^{+\infty}e^{i(t-t')\Delta^2}(|u|^\frac{8}{d-4}u)(t')dt'\big\|_{L_t^2L_x^\frac{2d}{d-4}(\tilde{I}\times\Bbb
R^d)}\\
\lesssim&\
\big\||u|^\frac{8}{d-4}u\big\|_{L_t^2L_x^\frac{2d}{d+4}(\tilde{I}\times\Bbb
R^d)}\\
\lesssim&\
\big\|u\big\|_{L_t^2L_x^\frac{2d}{d-4}(\tilde{I}\times\Bbb
R^d)}\big\|u\big\|_{L_t^\infty
L_x^\frac{2d}{d-4}(\tilde{I}\times\Bbb R^d)}^\frac{8}{d-4}\\
\lesssim&\
\varepsilon^\frac{8}{d-4}\big\|u\big\|_{L_t^2L_x^\frac{2d}{d-4}(\tilde{I}\times\Bbb
R^d)}.
\end{align*}
Thus we get that $u\equiv0$ on $\tilde{I}$. By energy conservation,
$u\equiv0$ on $\Bbb R$, which contradicts $S_{\Bbb R}(u)=\infty$.
\end{proof}

\begin{proposition}[Concentration of $L^p$ norm] \label{con}
Let $u$ be the soliton-like solution in the sense of Theorem
\ref{enemies}, then for every $1<p<+\infty$, we have
\begin{equation*}
\inf_{t\in\Bbb R}\|u(t,x)\|_{L_x^p}>0.
\end{equation*}
\end{proposition}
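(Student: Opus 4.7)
The plan is to reduce the claim to three elementary H\"older interpolations, anchored by the lower bound $\inf_t\|u(t)\|_{L^{q_1}}>0$ (with $q_1:=\tfrac{2d}{d-4}$) supplied by Proposition \ref{pe}, the conserved $L^2$ norm, and a uniform upper bound in some $L^{q_0}$ with $q_0<2$ coming from negative regularity. First I would invoke Theorem \ref{reg}, which applies to any soliton-like solution since $N(t)\equiv 1\ge 1$; this yields $u\in L^\infty_t L^2_x\cap L^\infty_t\dot{H}^{-\varepsilon}_x$ for some $\varepsilon=\varepsilon(d)>0$, and Sobolev embedding upgrades the second inclusion to $u\in L^\infty_t L^{q_0}_x$ with $q_0:=\tfrac{2d}{d+2\varepsilon}<2$. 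Mass conservation makes $\|u(t)\|_{L^2}=M(u)^{1/2}$ constant in $t$, and this constant must be strictly positive, for otherwise $u\equiv 0$, contradicting Proposition \ref{pe}.

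Next I would split $(1,\infty)$ into three ranges of $p$. For $p\in(1,2]$, applying H\"older to $|u|^2=|u|^{2\theta}|u|^{2(1-\theta)}$ with $\tfrac{1}{2}=\tfrac{\theta}{p}+\tfrac{1-\theta}{q_1}$ gives
\[
\|u(t)\|_{L^2}\le\|u(t)\|_{L^p}^{\theta}\|u(t)\|_{L^{q_1}}^{1-\theta},
\]
and solving for $\|u(t)\|_{L^p}$ with the positive constant $\|u(t)\|_{L^2}$ on the left and the Sobolev bound $\|u(t)\|_{L^{q_1}}\lesssim E(u)^{1/2}$ on the right produces the desired lower bound. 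For $p\ge q_1$ I would run the same trick in reverse,
\[
\|u(t)\|_{L^{q_1}}\le\|u(t)\|_{L^2}^{\theta''}\|u(t)\|_{L^p}^{1-\theta''},
\]
using Proposition \ref{pe} on the left side and the upper bound on $\|u(t)\|_{L^2}$ on the right.

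The one delicate range is $p\in(2,q_1)$: for such $p$ the standard interpolation between $L^2$ and $L^{q_1}$ only bounds $\|u\|_{L^p}$ from above, which is useless here. This is exactly where negative regularity does the work. In this range I would interpolate
\[
\|u(t)\|_{L^2}\le\|u(t)\|_{L^{q_0}}^{\theta'}\|u(t)\|_{L^p}^{1-\theta'},\qquad\tfrac{1}{2}=\tfrac{\theta'}{q_0}+\tfrac{1-\theta'}{p},
\]
which is well-defined precisely because $q_0<2<p$, and then combine the positive lower bound on $\|u(t)\|_{L^2}$ with the uniform upper bound on $\|u(t)\|_{L^{q_0}}$ coming from Theorem \ref{reg} to conclude.

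The main obstacle is structural rather than computational: the intermediate range $p\in(2,q_1)$ forces one to have uniform $L^{q_0}$ control for some $q_0<2$, and this is exactly the payoff of the hard work in the proof of Theorem \ref{reg}. Once that is in hand, the entire proposition reduces to three one-line H\"older estimates, with no compactness or Duhamel input required beyond what Proposition \ref{pe} already uses.
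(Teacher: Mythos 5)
Your approach agrees with the paper for $p\in(1,2]$ and for $p\ge 2^\#:=\frac{2d}{d-4}$: interpolate $L^2$ between $L^p$ and $L^{2^\#}$ (using mass conservation and Proposition \ref{pe}) in the first case, and interpolate $L^{2^\#}$ between $L^2$ and $L^p$ in the second. These two cases are fine.

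The argument for the intermediate range $p\in(2,2^\#)$ has a genuine gap. You claim that ``Sobolev embedding upgrades'' $u\in L^\infty_t\dot H^{-\varepsilon}_x$ to $u\in L^\infty_t L^{q_0}_x$ with $q_0=\frac{2d}{d+2\varepsilon}<2$. That embedding is false: Sobolev/Hardy--Littlewood--Sobolev gives $L^{q_0}\hookrightarrow\dot H^{-\varepsilon}$, and the reverse inclusion $\dot H^{-\varepsilon}\hookrightarrow L^{q_0}$ fails (by duality it would require $L^{q_0'}\hookrightarrow\dot H^{\varepsilon}$, which is certainly wrong). More generally, no homogeneous Sobolev space $\dot H^s$ with $s<0$ embeds into any $L^q$ with $q<2$; the fractional integral $|\nabla|^{-\varepsilon}$ strengthens low frequencies and Bernstein only runs in the direction of increasing Lebesgue exponent, so there is no uniform $L^{q_0}$ bound to plug into your H\"older inequality for $q_0<2$. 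Negative regularity gives $u\in L^2$ (which you already use), but it does \emph{not} give integrability below $L^2$.

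The paper treats $2<p<2^\#$ by a different mechanism: it argues by contradiction and exploits almost periodicity directly. From (\ref{equ52}), (\ref{cc}) and Proposition \ref{pe}, the potential energy of a soliton (with $N(t)\equiv 1$) concentrates on the frequency annulus $c(\eta)<|\xi|<C(\eta)$, so $\|P_{c(\eta)<\cdot<C(\eta)}u(t_n)\|_{L^{2^\#}}\gtrsim 1$. On that fixed annulus, Bernstein gives $\|P_{c(\eta)<\cdot<C(\eta)}u(t_n)\|_{L^{2^\#}}\lesssim C(\eta)\|u(t_n)\|_{L^p}$, which would tend to zero if $\|u(t_n)\|_{L^p}\to 0$, a contradiction. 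To repair your proof you would need to replace the spurious $L^{q_0}$ bound with this kind of frequency-localization argument (or some equivalent use of compactness); pure $L^2$/$L^{2^\#}$ interpolation cannot reach exponents strictly between $2$ and $2^\#$ from below.
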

\begin{proof} By Theorem \ref{neg}, $u\in L_x^2(\Bbb R^d)$. If
$p>2^{\#}$, then interpolation
\begin{equation*}
\|u(t)\|_{L_x^{2^{\#}}}\lesssim
\|u(t)\|_{L_x^2}^{1-\frac{4p}{(p-2)d}}\|u(t)\|_{L_x^p}^\frac{4p}{(p-2)d},
\end{equation*}
combined with Proposition \ref{pe}, yields that
\begin{equation*}
\inf_{t\in\Bbb R}\|u(t)\|_{L_x^p}>0.
\end{equation*}
If $1\leq p<2$, by interpolation
\begin{equation*}
\|u(t)\|_{L_x^2}\lesssim
\|u(t)\|_{L_x^p}^\frac{4p}{2d-(d-4)p}\|u(t)\|_{L^{2^{\#}}}^\frac{(2-p)d}{2d-(d-4)p}
\end{equation*}
and mass conservation, we have
\begin{equation*}
\inf_{t\in\Bbb R}\|u(t)\|_{L_x^p}>0.
\end{equation*}
Finally we consider the case of $2<p<2^{\#}$. If $\inf_{t\in \Bbb
R}\|u(t,x)\|_{L_x^p}=0$, then there exists $\{t_n\}$ such that
$\lim_{n\rightarrow\infty}\|u(t_n,x)\|_{L_x^p}=0$. On the other
hand, by (\ref{equ52}), (\ref{cc}) and Proposition \ref{pe}, we have
\begin{equation}\label{equ62}
\lim_{n\rightarrow\infty}\big\|P_{c(\eta)<\cdot<C(\eta)}u(t_n)\big\|_{L_x^{2^{\#}}}\gtrsim
1,
\end{equation}
as long as $\eta$ is chosen sufficiently small. By Sobolev
embedding,
\begin{align*}
\big\|P_{c(\eta)<\cdot<C(\eta)}u(t_n)\big\|_{L_x^{2^{\#}}}\lesssim&\big\||\nabla|
^{d(\frac{1}{p}-\frac{d-4}{2d})}P_{c(\eta)<\cdot<C(\eta)}u(t_n)\big\|_{L_x^p}\\
\lesssim& C(\eta)\|u(t_n)\|_{L_x^p}\rightarrow0, \ \ \text{as}\ \
n\rightarrow\infty.
\end{align*}
This contradicts (\ref{equ62}), hence completes the proof.
\end{proof}

To kill the soliton, we need the interaction Morawetz estimate. The
Interaction Morawetz estimates was obtained in \cite{Pa2} in
dimension $d\geq7$ and then was extended to dimensions $d\geq5$ in
\cite{mwz}.
\begin{proposition}[Interaction Morawetz estimates, \cite{mwz}]\label{ime} Let $u\in C_t^0(I, H^2_x(\Bbb R^d))$ be the
solution to
\begin{equation*}\label{fnlsp} \left\{ \aligned
    iu_t +  \Delta^2 u  & = \lambda|u|^{p-1}u, \quad  \text{in}\  \mathbb{R} \times \mathbb{R}^d,\\
     u(0)&=u_0(x), \quad\quad \text{in} \ \mathbb{R}^d.
\endaligned
\right.
\end{equation*}
where $1< p\leq 2^{\#}-1$. Then if $d>5$, we have
\begin{equation}\label{equ63}
\int_{I}\iint_{\Bbb R^d\times\Bbb
R^d}\frac{|u(t,x)|^2|u(t,y)|^2}{|x-y|^5}dxdydt+\int_{I}\iint_{\Bbb
R^d\times\Bbb
R^d}\frac{|u(t,y)|^2|u(t,x)|^\frac{2d}{d-4}}{|x-y|}dxdydt\lesssim_u
1.
\end{equation}
If $d=5$, we have
\begin{equation}\label{equ64}
\int_{I}\int_{\Bbb R^5}|u(t,x)|^4dxdt+\int_{I}\iint_{\Bbb
R^5\times\Bbb
R^5}\frac{|u(t,y)|^2|u(t,x)|^{10}}{|x-y|}dxdydt\lesssim_u 1.
\end{equation}
\end{proposition}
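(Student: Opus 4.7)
The plan is to adapt the interaction Morawetz method of \cite{ckstt:gwp} to the bilaplacian setting. Introduce the interaction Morawetz functional
$$M(t):=2\,\mathrm{Im}\iint_{\Bbb R^d\times\Bbb R^d}|u(t,y)|^2\,\nabla a(x-y)\cdot \bar u(t,x)\nabla u(t,x)\,dx\,dy,$$
with weight $a(x)=|x|$. The a priori bound $|M(t)|\lesssim \|u(t)\|_{L^2_x}^3\,\|\nabla u(t)\|_{L^2_x}$ follows from Cauchy-Schwarz together with Hardy's inequality, so under the hypothesis $u\in C^0_t H^2_x$ the quantity $M(t)$ is uniformly finite in $t$. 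The objective is to show that $\dot M(t)$ dominates, up to controllable error, the two nonnegative integrands appearing on the left of (\ref{equ63}).

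To compute $\dot M$ I would use $iu_t=-\Delta^2 u+\lambda|u|^{p-1}u$ to derive the fourth-order analogues of the mass and momentum conservation laws: the mass density $\rho=|u|^2$ obeys $\partial_t\rho+\nabla\cdot P=0$ for a third-order current $P$ involving $\mathrm{Im}(\bar u\nabla\Delta u-\nabla\bar u\,\Delta u)$, and the momentum density $p_j=2\,\mathrm{Im}(\bar u\,\partial_j u)$ obeys $\partial_t p_j+\partial_k T_{jk}=\frac{2\lambda(p-1)}{p+1}\partial_j(|u|^{p+1})$ with a stress tensor $T_{jk}$ quadratic in $\nabla^2u,\nabla u,u$. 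Substituting these into $\dot M$ and pushing every derivative onto $a(x-y)$ by repeated integration by parts in $x$ and $y$, two sign-definite bulk terms survive: a kinetic interaction with coefficient $-\Delta^3 a(x-y)=c_d|x-y|^{-5}$, positive in $d\geq 6$ and producing the first summand of (\ref{equ63}); and a nonlinear interaction with coefficient $\Delta a(x-y)=(d-1)|x-y|^{-1}$ multiplied by $|u(x)|^{p+1}|u(y)|^2$, producing the second. The remaining cross terms should rearrange, schematically, into sums of squares of Wronskian-type expressions $|\nabla u(x)\,u(y)-u(x)\,\nabla u(y)|^2$ or be absorbed as error controlled by $\|u\|_{L^\infty_t H^2_x}$.

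Integrating $\dot M$ over $I$ and using the uniform bound on $M$ at the endpoints yields (\ref{equ63}) in dimensions $d\geq 6$. For $d=5$ the weight $-\Delta^3 a\sim|x-y|^{-5}$ fails to be locally integrable at the diagonal, so the bilinear form must be regularized (for instance by replacing $a$ with a smoothed variant and passing to the limit); the net effect is that the kinetic interaction collapses to the diagonal local quantity $\int|u|^4\,dx$, while the nonlinear potential term survives intact, yielding (\ref{equ64}). The subcritical restriction $p\leq 2^{\#}-1$ is needed so that the nonlinear flux carries the favorable sign after integration by parts.

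The main obstacle is positivity. Because $\Delta^2$ forces up to four integrations by parts, the integrand acquires many mixed derivative contributions of schematic form $(\partial^i\partial^j a)\,\partial_i\bar u\,\partial_j\Delta u$ whose signs are not manifest. Verifying that these reassemble into nonnegative quadratic forms plus errors controlled by Hardy and Sobolev in $\dot H^2$ is the delicate step, and this positivity check is precisely what fails in $d<5$ and requires diagonal regularization in $d=5$.
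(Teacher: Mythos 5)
The paper does not prove this proposition; it imports it wholesale from the reference \cite{mwz} (see also \cite{Pa2} for $d\geq 7$), so there is no in-text proof to compare against. Judged on its own terms, your plan does track the standard interaction Morawetz strategy that underlies those references: form the bilinear action $M(t)$ with $a=|x|$, bound it uniformly via $|M(t)|\lesssim\|u\|_{L^2}^3\|\nabla u\|_{L^2}$ (correct, and here Cauchy--Schwarz with $|\nabla a|\leq 1$ already suffices, Hardy is unnecessary), and integrate $\dot M$ over $I$.

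Two issues, one minor and one substantive. The minor one is a sign slip: with $a=|x|$ one has $\Delta a=(d-1)|x|^{-1}$, $\Delta^2 a=(d-1)(3-d)|x|^{-3}$, and $\Delta^3 a=3(d-1)(d-3)(d-5)|x|^{-5}$, so it is $+\Delta^3 a$, not $-\Delta^3 a$, that is positive for $d>5$ and that degenerates to a positive multiple of $\delta_0$ when $d=5$ (since $|x|^{-3}$ is the Newtonian potential there). The substantive issue is that the paragraph beginning ``The main obstacle is positivity'' is not an obstacle you flag and move past; it is the entire content of the theorem. For $i\partial_t u+\Delta^2 u=\lambda|u|^{p-1}u$, the momentum flux $T_{jk}$ is not quadratic in $(u,\nabla u,\nabla^2 u)$ as you write; differentiating $p_j=2\,\mathrm{Im}(\bar u\,\partial_j u)$ and substituting $u_t=i\Delta^2 u-\cdots$ produces terms like $\mathrm{Re}(\partial_j\bar u\,\partial_k\Delta u)$ and $\partial_j\partial_k\Delta|u|^2$, so that pairing with $\nabla a(x-y)$ and integrating by parts generates contractions of $\nabla^m a$ for $m$ up to six against mixed third and second derivatives of $u$. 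These do not visibly organize into sums of squares or Wronskians, and identifying a positive-definite (or at least sign-controllable) rearrangement is precisely what \cite{Pa2} and \cite{mwz} labor to do; it is also where the restriction $p\leq 2^{\#}-1$ and the dimension hypothesis actually enter, rather than via the sign of the nonlinear flux as you suggest. As written, your proposal reduces the proof to an unverified assertion at its only genuinely hard point, so it should be regarded as an outline with a gap rather than a proof.
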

Proposition \ref{ime} and Theorem \ref{reg} yield
\begin{corollary}\label{co8}Fix $d\geq9$. Suppose $u$ is the
soliton-like solution to (\ref{fnls}) in the sense of Theorem
\ref{enemies}, then we have
\begin{equation}\label{equ633}
\int_{\Bbb R}\iint_{\Bbb R^d\times\Bbb
R^d}\frac{|u(t,x)|^2|u(t,y)|^2}{|x-y|^5}dxdydt+\int_{\Bbb
R}\iint_{\Bbb R^d\times\Bbb
R^d}\frac{|u(t,y)|^2|u(t,x)|^\frac{2d}{d-4}}{|x-y|}dxdydt\lesssim_u
1.
\end{equation}
\end{corollary}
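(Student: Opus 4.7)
The proof is essentially a verification that a soliton-like solution $u$ satisfies the hypotheses of Proposition \ref{ime} globally in time, and an argument that the bound is uniform on all of $\mathbb{R}$. The plan is to combine the regularity furnished by Theorem \ref{reg} with the definition of a solution to promote $u$ from an $\dot{H}^2$-solution to a bona fide $H^2$-solution, then invoke the interaction Morawetz estimate of \cite{mwz} on arbitrarily long compact sub-intervals and send their length to infinity.

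First I would check the regularity hypothesis. By Definition 1.1 the solution satisfies $u\in C_t^0\dot{H}_x^2(K\times\mathbb{R}^d)$ on compact sets; for a soliton (scenario II of Theorem \ref{enemies}) the lifespan is all of $\mathbb{R}$, and the kinetic energy is controlled by the conserved energy $E(u)<\infty$, giving $u\in L_t^\infty\dot{H}_x^2(\mathbb{R}\times\mathbb{R}^d)$. Since a soliton satisfies $\inf_t N(t)=1$, Theorem \ref{reg} applies and yields $u\in L_t^\infty L_x^2(\mathbb{R}\times\mathbb{R}^d)$ (and in fact $u\in L_t^\infty\dot{H}_x^{-\varepsilon}$ for some $\varepsilon>0$). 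Combining these two facts gives $u\in L_t^\infty H_x^2(\mathbb{R}\times\mathbb{R}^d)$, and strong continuity in time in $H_x^2$ follows from the Duhamel formula together with the continuity of $e^{it\Delta^2}$ on $H_x^2$ and the fact that $F(u)\in L^1_{t,\mathrm{loc}}L^2_x$ (which is guaranteed by the $\dot H^2$ bound and Sobolev embedding, since $d\ge 9$). Thus $u\in C_t^0(\mathbb{R},H_x^2(\mathbb{R}^d))$, which is precisely the hypothesis of Proposition \ref{ime}, with $p=\frac{d+4}{d-4}\le 2^\#-1$.

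Next I would apply Proposition \ref{ime} on the interval $I_T=[-T,T]$ for each $T>0$. Because $d\geq 9>5$, the estimate \eqref{equ63} is the relevant one, and its right-hand side is $\lesssim_u 1$, where inspection of the proof in \cite{mwz} shows that the implicit constant depends only on $\|u\|_{L_t^\infty H_x^2}$. Both $\|u\|_{L_t^\infty\dot H^2_x}$ and $\|u\|_{L_t^\infty L^2_x}$ are bounded on all of $\mathbb{R}$ by the two sources cited above, so the constant is uniform in $T$. Therefore
\begin{equation*}
\int_{-T}^{T}\!\!\iint_{\mathbb{R}^d\times\mathbb{R}^d}\!\frac{|u(t,x)|^2|u(t,y)|^2}{|x-y|^5}dxdydt+\int_{-T}^{T}\!\!\iint_{\mathbb{R}^d\times\mathbb{R}^d}\!\frac{|u(t,y)|^2|u(t,x)|^{\frac{2d}{d-4}}}{|x-y|}dxdydt\lesssim_u 1,
\end{equation*}
with a constant independent of $T$. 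Letting $T\to\infty$ and invoking the monotone convergence theorem on the two nonnegative integrands promotes this to the global bound \eqref{equ633}.

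I do not expect any serious obstacle: the only non-trivial ingredient is the membership $u\in L_t^\infty L_x^2$, which is exactly the point where Theorem \ref{reg} (and hence the dimension restriction $d\ge 9$) enters. Once that is in hand, the corollary is a direct packaging of Proposition \ref{ime} with a conserved-quantity argument. It is worth stressing that the analogous estimate fails for a general solution because the $L_x^2$ norm is not a priori finite; the point of the corollary is precisely that the negative regularity produced in Section 5 lifts the Morawetz estimate from an $H^2$ a priori inequality to a genuine global bound for the soliton-like enemy.
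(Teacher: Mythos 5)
Your proposal is correct and matches the paper's intended argument, which is stated only as ``Proposition \ref{ime} and Theorem \ref{reg} yield'' the corollary: Theorem \ref{reg} supplies $u\in L_t^\infty L_x^2$, which together with the conserved $\dot H^2$ bound gives $u\in C_t^0(\mathbb{R},H_x^2)$ and allows Proposition \ref{ime} (with $p=2^\#-1$) to be applied with a constant independent of the interval. Your extra care in promoting $L_t^\infty H^2_x$ to $C_t^0 H^2_x$ and in taking $T\to\infty$ by monotone convergence is simply a fleshing-out of the same route.
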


Now we can kill the soliton thus complete the proof the Theorem 1.1.

{\it Proof of Theorem \ref{soliton}.} Fix $d\geq9$. Let $u$ be the
soliton-like solution as in Theorem \ref{enemies}. Then by Corollary
\ref{co8}, we have
\begin{equation*}
\big\||\nabla|^{-\frac{d-5}{2}}|u|^2\big\|_{L_{t,x}^2(\Bbb
R\times\Bbb R^d)}\lesssim_u 1.
\end{equation*}
On the other hand, by Sobolev embedding and energy conservation, we
have
\begin{equation*}
\big\|\nabla|u|^2\big\|_{L_t^\infty L_x^\frac{d}{d-3}(\Bbb
R\times\Bbb R^d)}\leq C\|\nabla u\|_{L_t^\infty
L_x^\frac{2d}{d-2}(\Bbb R\times\Bbb R^d)}\|u\|_{L_t^\infty
L_x^\frac{2d}{d-4}(\Bbb R\times\Bbb R^d)}\lesssim_u 1.
\end{equation*}
Therefore, by interpolation, we have
\begin{equation*}
\||u|^2\|_{L_t^{d-3}L_x^\frac{d(d-3)}{d^2-7d+15}(\Bbb R\times\Bbb
R^d)}\leq
C\big\||\nabla|^{-\frac{d-5}{2}}|u|^2\big\|_{L_{t,x}^2(\Bbb
R\times\Bbb R^d)}^\frac{2}{d-3}\big\|\nabla|u|^2\big\|_{L_t^\infty
L_x^\frac{d}{d-3}(\Bbb R\times\Bbb R^d)}^\frac{d-5}{d-3},
\end{equation*}
hence
\begin{equation}\label{equ65}
\big\|u\big\|_{L_t^{2(d-3)}L_x^\frac{2d(d-3)}{d^2-7d+15}(\Bbb
R\times\Bbb R^d)}\lesssim_u 1.
\end{equation}
However, by Proposition \ref{con},
\begin{equation*}
\big\|u\big\|_{L_x^\frac{2d(d-3)}{d^2-7d+15}(\Bbb R^d)}\gtrsim_u 1.
\end{equation*}
So
\begin{equation*}
\big\|u\big\|_{L_t^{2(d-3)}L_x^\frac{2d(d-3)}{d^2-7d+15}(\Bbb
R\times\Bbb R^d)}=+\infty,
\end{equation*}
which contradicts (\ref{equ65}). This completes the proof of Theorem
\ref{soliton}.

\vskip0.5cm
 \textbf{Acknowledgements:} C. Miao, G.Xu and L. Zhao
were partly supported by the NSF of China (No.10725102, No.10801015
and No. 10901148). The authors would like to thank B. Pausader for
providing us with his thesis.

\end{document}